\newtheorem{theorem}{Theorem}[section]
\theoremstyle{definition}
\newtheorem{definition}[theorem]{Definition}
\newtheorem{proposition}[theorem]{Proposition}
\newtheorem{conjecture}[theorem]{Conjecture}
\theoremstyle{remark}
\newtheorem{remark}[theorem]{Remark}
\newtheorem{question}[theorem]{Question}
\numberwithin{equation}{section}
\newcommand{\R}{\mathbb{R}}
\newcommand{\Z}{\mathbb{Z}}
\newcommand{\U}{\mathcal{U}}
\begin{document}

\title[The $TB$ of complete and complete bipartite graphs]{The total Thurston-Bennequin number of complete and complete bipartite Legendrian graphs}

\author[Danielle O'Donnol]{Danielle O'Donnol$^{\dag}$}
\address{Department of Mathematics, Oklahoma State University, Stillwater, OK 74078}
\email{odonnol@okstate.edu}

\author{Elena Pavelescu}
\address{Department of Mathematics, Oklahoma State University, Stillwater, OK 74078}
\email{elena.pavelescu@okstate.edu}

\thanks{$^{\dag}$ This work was partially supported by the National Science Foundation grant DMS-1406481.} 

\subjclass[2010]{Primary 57M25, 57M50;  Secondary 05C10}

\date{\today}

\keywords{Legendrian graph, Thurston-Bennequin number, complete graph, complete bipartite graph}

\begin{abstract}
We study the Thurston-Bennequin number of complete and complete bipartite Legendrian graphs. 
We define a new invariant called the total Thurston-Bennequin number of the graph.
We show that this invariant is determined by the Thurston-Bennequin numbers of $3-$cycles for complete graphs and by the Thurston-Bennequin number of $4-$cycles for complete bipartite graphs. 
We discuss the consequences of these results for $K_4$, $K_5$ and $K_{3,3}$.
 \end{abstract}

\maketitle


\section{Introduction}\label{intro}
Motivated by the appearance of Legendrian graphs in important results, the authors began studying them \cite{ODPa1}. 
Two nice examples of such results are  Giroux's proof of existence of open book decompositions compatible with a given contact structure \cite{G}, and 
Eliashberg and Fraser's proof of the Legendrian simplicity of the unknot \cite{EF}. 
We anticipate that with a better understanding of Legendrian graphs, they will become an even more robust tool.

In \cite{ODPa1}, the authors extended the classical invariants Thurston-Bennequin number, $tb$, and rotation number, $rot$, from Legendrian knots to Legendrian graphs. 
Throughout this paper we work in $\mathbb{R}^3$ with the standard contact structure, $\xi_{std}$.
The Thurston-Bennequin number measures the number of times the contact planes twist around the knot as the knot is traversed once, and can be computed from the front projection as $tb(K)=w(K)-\frac{1}{2}c(K)$, where $w(K)$ is the writhe and $c(K)$ is the number of cusps.  
For a Legendrian graph, $G,$ with a fixed order on its cycles, the {\it Thurston-Bennequin number}, $tb(G)$, is the ordered list of the Thurston-Bennequin numbers of its cycles.
Similarly, the {\it rotation number}, $rot(G)$, is the ordered list of the rotation numbers of its cycles.
It is known \cite{El} that if $K$ is a Legendrian knot in  $(\R^3, \xi_{std})$ and $\Sigma$ is a Seifert surface for $K$, then $$tb(K) + |rot (K)| \leq - \chi(\Sigma).$$
This inequality puts an upper bound on the $tb(K)$.
\noindent In particular,  if $K$ is the unknot, then $tb(K) \le -1$. 
There is a unique Legendrian unknot with maximal $tb(K) = -1,$ it is called the {\it trivial unknot}.   
For a topological knot type $\mathcal{K}$, we denote by $tb_{max}(\mathcal{K})$ the maximum $tb$ among all Legendrian knots of topological type $\mathcal{K}$.

In this paper, we introduce a new invariant of Legendrian graphs, called the total Thurston-Bennequin number. 
The {\it total Thurston-Bennequin number}, $TB(G)$, is the sum of $tb$s over all cycles of $G$.  
We derive a simplified diagrammatic means of computing $TB(G)$ for complete graphs and complete bipartite graphs, and show that it depends only on the $tbs$ of the smallest cycles.  
In particular cases, we show that the Thurston-Bennequin number $tb$ of a graph is determined by the total Thurston-Bennequin number $TB$.

The main theorems relate the total Thurston-Bennequin number of complete and complete bipartite graphs with the sum of the $tbs$ of their smallest cycles.  
For a Legendrian embedding of the complete graph on $n$ vertices, $K_n$, in $(\R, \xi_{std}),$ we show:
\[
\begin{array}{ccl}
TB(K_n) & = & TB_3(K_n) \cdot \displaystyle\sum_{r=3}^{n}\frac{(n-3)!}{(n-r)!} \\\\
  & = & \left[(n-2)w_e(K_n)+w_{ae}(K_n) -\frac{1}{2}[(n-2)c_e(K_n)+c_v(K_n) ] \right] \cdot \displaystyle \sum_{r=3}^{n}\frac{(n-3)!}{(n-r)!},
\end{array}
\]
where $TB_3(K_n)$ is the sum of the $tbs$ over all $3-$cycles of $K_n,$ and $w_{\ast}$ and $c_{\ast}$ indicate different writhes and cusp counts described in Section \ref{GTBC}.  
For a Legendrian embedding of the complete bipartite graph, $K_{n,m}$ with $m\leq n$ in $(\R, \xi_{std}),$ we show:
$$TB(K_{n,m})= TB_4(K_{n,m})\cdot \sum_{r=2}^{m} \frac{(m-2)!(n-2)!}{(m-r)!(n-r)!}, $$
where $TB_4(K_{n,m})$ is the sum of the $tbs$ over all $4-$cycles of $K_{n,m},$ and \\\\ 
$TB_4(K_{n,m})=$
\[
\begin{array}{cl}
  & (n-1)(m-1)w_e(K_{n,m})+(n-1)w_{ae[P]}(K_{n,m})+(m-1)w_{ae[Q]}(K_{n,m})+w_{ne}(K_{n,m}) \\\\
  & -\frac{1}{2}[(n-1)(m-1)c_e(K_{n,m})+(n-1)c_{v[P]}(K_{n,m})+(m-1)c_{v[Q]}(K_{n,m})].
\end{array}
\]
Here $w_{\ast}$ and $c_{\ast}$ indicate different writhes and cusp counts described in Section \ref{GTBB}.

Let a \textit{minimal embedding} of $G$ be one where all minimal length cycles are unknots with $tb=-1$.  
An  \textit{unknotted minimal embedding} of $G$ is a minimal embedding where all the cycles are unknots.
We give some examples of minimal embeddings of $K_4$, $K_5$ and $K_{3,3}$.  
For unknotted minimal embeddings of $K_4$ and $K_{3,3}$, using our understanding of $TB$ and the graphs structure, we show they have unique $tb$ and $|rot|$ up to relabelling of the cycles.
We give a lower bound for the $tb$ of an unknotted $r-$cycle in an embedding of $K_n$ with all $3-$cycles trivial unknots.\\

{\bf Acknowledgements.}  We would like to thank Youngjin Bae, Byung Hee An and Gabriel C. Drummond-Cole for useful conversation, and Tim Cochran and John Etnyre for their interest and support.


\section{The graph Thurston-Bennequin number for complete graphs}\label{GTBC}
In this section we introduce much of the notation and definitions that are used throughout this article.  
This section is focused on the total Thurston-Bennequin number of a complete graph.  
Here we prove the main theorem for the $TB$ of complete graphs and look at the consequences for embeddings of $K_4$ and $K_5$.  

We first introduce notation and definitions. 
\begin{definition}For a Legendrian graph $G$, we define the \textit{total Thurston-Bennequin number of $G$}, $TB(G)$, as the sum of Thurston-Bennequin numbers over all cycles of $G$.
For a Legendrian graph $G$, we define $TB_r(G)$ as the sum of Thurston-Bennequin numbers  over all $r-$cycles of $G$.
\end{definition}

\noindent 
A  \textit{minimal embedding} of $G$ is a Legendrian embedding where all minimal length cycles are trivial unknots.
We denote by $G[\mathcal{K}]$ an embedding of the graph $G$ for which all cycles have the knot type $\mathcal{K}$.
We denote the unknot with $\mathcal{U}$.  
An embedding $G[\U]$ with all minimal length cycles trivial unknots is called an \textit{unknotted minimal embedding}.  
The set of all $r-$cycles of a graph $G$ is denoted by $\Gamma_r(G)$, or simply $\Gamma_r$ when $G$ is understood.
For a cycle $\gamma$ of $G$, we denote by $w(\gamma)$ and $c(\gamma)$ the \textit{writhe of $\gamma$} (signed sum of crossings of $\gamma$) and the \textit{number of cusps of $\gamma$}, respectively. 
For edges $e$, $f$ and vertex $v$, we denote by $w(e)$ the writhe of edge $e$ with itself, by $w(e,f)$ the signed sum of crossings between edges $e$ and $f$, by $c(f)$ the number of cusps along edge $f$, and by $c(v)$ the number of cusps at vertex $v$, looking at each pair of edges going through $v$. 
For a front diagram of a Legendrian graph $G$, we define 
\begin{itemize}
\item the \textit{edge writhe of  $G$} as the sum of writhes over all edges of $G$, $w_e(G)=\sum_{f\in E(G)} w(f)$.
\item the \textit{adjacent edge writhe of $G$} as the sum of writhes over all pairs of adjacent edges of $G$, $w_{ae}(G)=\sum_{e, f\in E(G) \textrm{adj}} w(e,f)$.
\item the \textit{non-adjacent edge writhe  of $G$} as the sum of writhes over all pairs of non-adjacent edges of $G$, $w_{ne}(G)=\sum_{e, f\in E(G) \textrm{non-adj}} w(e,f)$.
\item the \textit{edge cusps of $G$} as the number of cusps along all edges of $G$, $c_{e}(G)=\sum_{f\in E(G)}  c(f)$.
\item the \textit{vertex cusps of $G$} as the count of cusps at all vertices of $G$,  $c_{v}(G)= \sum_{v\in V(G)} c(v) $.

\end{itemize}

\begin{theorem} Let $K_n$ be a Legendrian embedding of the complete graph on $n$ vertices in $(\R^3, \xi_{std})$.  
Then $TB_r(K_n)$ is a multiple of $TB_3(K_n)$. 
As a consequence, $TB(K_n)$ is a multiple of $TB_3(K_n)$. 
The quantities $TB_i(K_n)$, $3\le i\le n$,  can be computed from writhe and cusp counts of vertices and edges in the front projection rather than summing the tbs of the cycles.
\label{TBKn}
\end{theorem}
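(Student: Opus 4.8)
The plan is to compute $TB_r(K_n)=\sum_{\gamma\in\Gamma_r}tb(\gamma)=\sum_{\gamma\in\Gamma_r}\bigl(w(\gamma)-\tfrac12 c(\gamma)\bigr)$ directly and to show that, after interchanging the order of summation, the answer factors through $TB_3$. First I would decompose the writhe of a single $r$-cycle $\gamma$ into the self-writhes of its edges, the writhes of its \emph{adjacent} pairs of edges, and the writhes of its \emph{non-adjacent} pairs of edges, each oriented by the traversal of $\gamma$; similarly I would write $c(\gamma)$ as the cusps along the edges of $\gamma$ plus the cusps at the vertices of $\gamma$, where each vertex contributes the cusps of the unique pair of $\gamma$-edges meeting there. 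Summing over all $\gamma\in\Gamma_r$ then converts each local quantity into one of the global counts $w_e,w_{ae},w_{ne},c_e,c_v$, weighted by the number of $r$-cycles containing the relevant edge, pair of edges, or vertex-pair.

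The combinatorial input is a multiplicity count in $K_n$. A fixed edge $ab$ lies in exactly $\alpha_r=\frac{(n-2)!}{(n-r)!}$ cycles of length $r$ (choose and order the remaining $r-2$ cycle vertices into a path from $b$ back to $a$), while a fixed adjacent pair $uv,vw$ — equivalently a fixed pair of edges meeting at a vertex — lies in exactly $\beta_r=\frac{(n-3)!}{(n-r)!}$ such cycles. The identity $\alpha_r=(n-2)\beta_r$ is what ultimately produces the factorization. With these counts one gets $\sum_\gamma\sum_{e\in\gamma}w(e)=\alpha_r w_e$ and $\sum_\gamma\sum_{e\in\gamma}c(e)=\alpha_r c_e$, the adjacent-pair contribution $\beta_r w_{ae}$, and the vertex-cusp contribution $\beta_r c_v$. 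Each of these is immediate because cusps and self/adjacent writhes are insensitive to the traversal direction: for an adjacent pair in a cycle the relative orientation is forced, since the two cycle-edges at a degree-two vertex are always traversed consecutively.

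The main obstacle is the non-adjacent edge writhe, which a priori contributes a term $\delta_r w_{ne}$ that is absent from $TB_3$. The key claim is that this term vanishes after summation, and I would prove it one geometric crossing at a time. Fix non-adjacent edges $e=ab$, $f=cd$ and a crossing between them. An $r$-cycle through both induces a relative orientation on the pair, and the sign of that crossing in $w(\gamma)$ is $+$ or $-$ according to this relative orientation. The transposition $\sigma$ exchanging the endpoints $c,d$ is an automorphism of $K_n$ fixing $e$ and $f$ setwise; it permutes the $r$-cycles through $e$ and $f$, and because it reverses the traversal direction of $f$ relative to $e$ it carries each cycle of one relative orientation to a cycle of the opposite one. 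Being an involution, it pairs these cycles bijectively, so the two orientations occur equally often and the signed contribution of the fixed crossing cancels. Summing over all crossings of $e$ with $f$, and over all non-adjacent pairs, gives $\sum_{\gamma\in\Gamma_r}(\text{non-adjacent writhe})=0$.

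Assembling the pieces, $TB_r(K_n)=\alpha_r w_e+\beta_r w_{ae}-\tfrac12(\alpha_r c_e+\beta_r c_v)=\beta_r\bigl[(n-2)w_e+w_{ae}-\tfrac12((n-2)c_e+c_v)\bigr]$. The bracket is exactly $TB_3(K_n)$ — the case $r=3$, where $\beta_3=1$ and no non-adjacent pairs occur — so $TB_r(K_n)=\frac{(n-3)!}{(n-r)!}\,TB_3(K_n)$ is an integer multiple of $TB_3(K_n)$, proving the first assertion; summing over $3\le r\le n$ gives $TB(K_n)=TB_3(K_n)\sum_{r=3}^{n}\frac{(n-3)!}{(n-r)!}$, proving the second. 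Finally every quantity on the right — $w_e,w_{ae},c_e,c_v$ and the multiplicities $\alpha_r,\beta_r$ — is read off from the front projection, so each $TB_i(K_n)$ is computed from writhe and cusp counts rather than by summing the $tb$s of the cycles, which is the third assertion. As a sanity check I would verify the $r=3$ formula directly, using that every pair of adjacent edges of $K_n$ lies in a unique triangle.
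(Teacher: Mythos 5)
Your proposal is correct and follows essentially the same route as the paper: the same edge/adjacent-pair/non-adjacent-pair decomposition, the same multiplicity counts $\frac{(n-2)!}{(n-r)!}$ and $\frac{(n-3)!}{(n-r)!}$, and the same cancellation of the non-adjacent writhe by pairing each cycle $\ldots ab \ldots cd \ldots$ with $\ldots ab \ldots dc \ldots$ (your transposition $\sigma=(c\,d)$ is just a cleaner way of exhibiting that involution). No substantive differences.
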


\begin{proof}
When computing the sum of writhes over the $3-$cycles or the $r-$cycles of $K_n$, we consider crossings of an edge with itself, crossings between adjacent edges and crossings between non-adjacent edges.
\begin{enumerate}
 \item  Each edge of $K_n$ appears in $n-2$ of the $3-$cycles and in ${n-2 \choose r-2} (r-2)! = \frac{(n-2)!}{(n-r)!}$ $r-$cycles. 
\item Each pair of adjacent edges of $K_n$ appears in one $3-$cycle and in $\frac{(n-3)!}{(n-r)!}$ $r-$cycles. 
\item Non-adjacent edges do not contribute to the total writhe of the graph. 
The cycles containing both edges $ab$ and $cd$ can be paired as  $...ab...cd...$ and 
 $ ...ab...dc... $. The signed intersection of the two edges in one of the cycles is the negative of their signed intersection in the other cycle. 
 This means that the crossings between the two edges do not contribute to the sum of the writhes. 
    \end{enumerate}

  Items (1)--(3) above give $$\sum_{\gamma \in \Gamma_r} w(\gamma) = \frac{(n-3)!}{(n-r)!} \sum_{\gamma \in \Gamma_3} w(\gamma) =  \frac{(n-3)!}{(n-r)!} [(n-2)w_e(K_n)+w_{ae}(K_n) ].
 $$
Cusps occur either at a vertex, that is, at each pair of adjacent edges, or along one edge. Using (1) and (2) above, we have 
$$\sum_{\gamma \in \Gamma_r} c(\gamma) = \frac{(n-3)!}{(n-r)!} \sum_{\gamma \in \Gamma_3} c(\gamma)=  \frac{(n-3)!}{(n-r)!} [(n-2)c_e(K_n)+c_v(K_n) ]. $$
For every cycle $\gamma$, $tb(\gamma) = w(\gamma) - \frac{1}{2}c(\gamma)$. Then the two identities above give 
\[
\begin{array}{ccl}
TB_r(K_n)  & = & \frac{(n-3)!}{(n-r)!} TB_3(K_n)  \\\\
  & = & \frac{(n-3)!}{(n-r)!}\left[(n-2)w_e(K_n)+w_{ae}(K_n) -\frac{1}{2}[(n-2)c_e(K_n)+c_v(K_n)] \right]  .
\end{array}
\]
Adding over $r$ gives
\[
\begin{array}{ccl}
TB(K_n) & = & TB_3(K_n) \cdot \displaystyle\sum_{r=3}^{n}\frac{(n-3)!}{(n-r)!} \\\\
  & = & \left[(n-2)w_e(K_n)+w_{ae}(K_n) -\frac{1}{2}[(n-2)c_e(K_n)+c_v(K_n) ] \right] \cdot \displaystyle\sum_{r=3}^{n}\frac{(n-3)!}{(n-r)!}. 
\end{array}
\]

\end{proof}

Theorem \ref{TBKn} has many consequences. 
We focus on minimal embeddings.
For the front project of such an embedding, place all vertices of $K_n$ on the same horizontal line in the front projection.
Then place the edges adjacent to a given vertex as nested arcs, with no intersections between adjacent edges.
See the left image of Figure \ref{fig-Kn} for such an embedding of $K_6.$
All $3-$cycles in this embedding have a front projection like that of the unknot on the right in the picture and therefore are trivial unknots.


\begin{figure}[htpb!]
\begin{center}
\begin{picture}(500, 85)
\put(0,0){\includegraphics[width=6in]{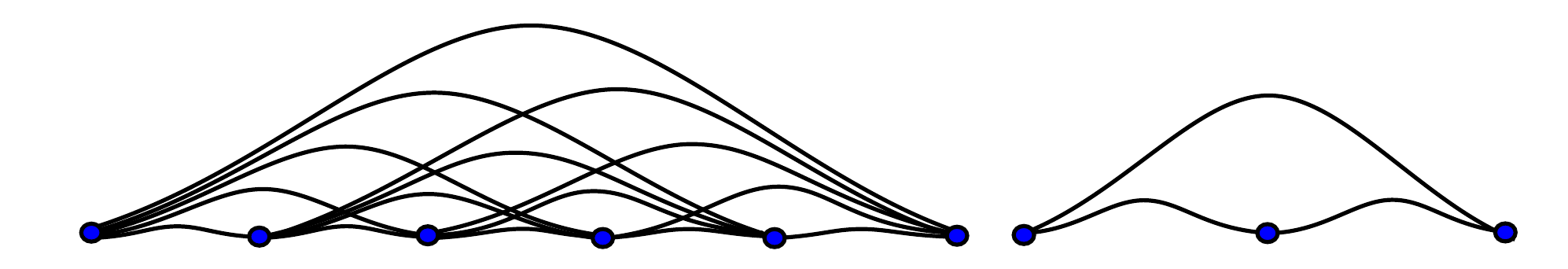}}
\end{picture}
\caption{\small A minimal embedding of $K_6$ and a $3-$cycle in this embedding. }
\label{fig-Kn}
\end{center}
\end{figure}

 \begin{subsection}{Remarks about $K_4$}
 \label{subsecK4}

 In \cite{ODPa1}, the authors showed there does not exist a Legendrian embedding of $K_4[\mathcal{U}]$ where all cycles are trivial unknots.
 Moreover, $TB(K_4[\mathcal{U}])$ is shown to be at most $-8$.
  Theorem  \ref{TBKn} gives a refinement of this result. For $n=4$, we have $$TB_4(K_4)= TB_3(K_4).$$
The graph $K_4$ has seven cycles, four cycles of length 3 and three  of length 4.
This means that for a Legendrian embedding of $K_4[\mathcal{U}]$ with all trivial $3-$cycles we have
$$TB_4(K_4)= TB_3(K_4) = -4. $$
For such an embedding, exactly one of its three $4-$cycles has $tb=-2$ and the other two have $tb=-1$.
This is the only way to have $TB(K_4[\mathcal{U}])=-8$. 
If all the $4-$cycles have $tb=-1$, then $TB(K_4[\mathcal{U}]) =-6$, which is a contradiction.

The $4-$cycles of $K_4$ are equivalent under graph automorphism, so up to a relabelling of the cycles there is a unique $tb(K_4)$ for an unknotted  minimal embedding of $K_4$.  
Since unknots with $tb=-2$ and $tb=-1$ are unique, there is only one possible $tb(K_4)$ and $|rot(K_4)|$ for an unknotted  minimal embedding of $K_4$.  
In Figure \ref{K4examples}, we show two diagrams of an unknotted minimal embedding of $K_4$.  
(See Figures \ref{K4flat-flat} and \ref{K4other-flat} for the equivalence.)  
While there is a unique $tb$ and $|rot|$, this does not mean there is only one unknotted minimal embedding of $K_4$.


\begin{figure}[htpb!]
\begin{center}
\begin{picture}(330, 75)
\put(0,0){\includegraphics[width=5in]{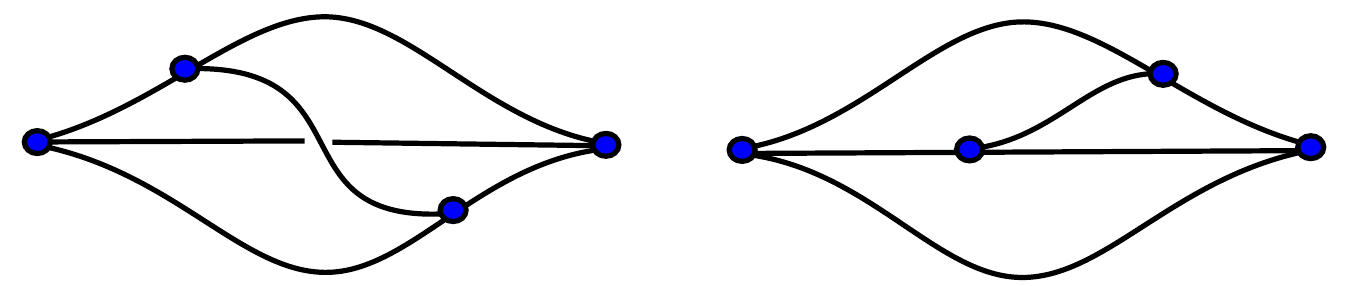}}
\put(173, 33){\Large $\simeq$}
\end{picture}
\caption{\small Two diagrams of an unknotted minimal embedding of $K_4$.}
\label{K4examples}
\end{center}
\end{figure}

\begin{remark}
In more generality, for $K_n$ with all $3-$cycles trivial unknots and $4-$cycles unknots, one third of $4-$cycles have $tb=-2$ and two thirds of $4-$cycles have $tb=-1$. 
This is because for every $K_4$ subgraph of $K_n$,  exactly one of three $4-$cycles has $tb=-2$ and the other two have $tb=-1$.
\end{remark}

\begin{remark}
The graph $K_4$ is adaptable \cite{Ya}, that is, given any set of seven knot types, there exists an embedding of $K_4$ with its seven cycles realizing the seven knot types. 
All topological embeddings of a graph have a Legendrian realization \cite{ODPa1}.
Since $TB_4(K_4)= TB_3(K_4)$, if all cycles of $K_4$ are of the same knot type, $L$, with $tb_{max}(L) >0$, then at least one of the $3-$cycles has non-maximal $tb$.
On the other hand, if all cycles of $K_4$ are of the same knot type, $L$,  with $tb_{max} (L)< 0$, then at least one of the $4-$cycles has non-maximal $tb$.
\end{remark}

\end{subsection}

\subsection{Remarks about $K_5$} We look at unknotted minimal Legendrian embeddings of $K_5$.
Since $K_5$ has ten $3-$cycles, Theorem \ref{TBKn} for  $n=5$ and $r=4$ says $$TB_4(K_5) =2TB_3(K_5)=-20.$$
Each of five $K_4$ subgraphs of $K_5$ contains exactly one $4-$cycle with $tb=-2$. 
This means that among the fifteen $4-$cycles of $K_5$, ten have $tb=-1$ and five have $tb=-2$.

For $n=5$ and $r=5$, Theorem \ref{TBKn} says
$$TB_5(K_5) =2TB_3(K_5)=-20.$$
In Proposition \ref{bound}, we show that the minimum $tb$ for an unknotted $5-$cycle is $-4$.
There are ten possible ways to write $-20$ as a sum of twelve integers in the set $\{-4, -3, -2, -1\}$.
These ten sequences are candidates for the $tb$s of the $5-$cycles of $K_5$:
$$s_1=(-1, -1, -1, -1, -2, -2, -2, -2, -2, -2, -2, -2) $$ 
$$s_2=(-1, -1, -1, -1, -1, -2, -2, -2, -2, -2, -2, -3) $$ 
$$s_3= (-1, -1, -1, -1, -1, -1, -2, -2, -2, -2, -3, -3)$$
$$s_4= (-1, -1, -1, -1, -1, -1, -1, -2, -2, -3, -3, -3)$$
$$s_5= (-1, -1, -1, -1, -1, -1, -1, -1, -3, -3, -3, -3)$$
$$s_6=(-1, -1, -1, -1, -1, -1, -2, -2, -2, -2, -2, -4) $$ 
$$s_7=(-1, -1, -1, -1, -1, -1, -1, -2, -2, -2, -3, -4) $$ 
$$s_8= (-1, -1, -1, -1, -1, -1, -1, -1, -2, -3, -3, -4)$$
$$s_9= (-1, -1, -1, -1, -1, -1, -1, -1, -2, -2, -4, -4)$$
$$s_{10}= (-1, -1, -1, -1, -1, -1, -1, -1, -1, -3, -4, -4)$$

Unlike unknotted minimal embeddings of $K_4$, an unkotted minimal embedding of $K_5$ can have cycles with $tb<-2$.
So it can have cycles where the Bennequin inequality given by $tb(\mathcal{U})+|rot(\mathcal{U})|\le -1$ (here $\mathcal{U}$ denotes an unknotted cycle) is strict.
We give examples realizing sequences $s_2$ and $s_3$.
See Figure \ref{K5examples}.
The leftmost $K_5$ in Figure \ref{K5examples}, has one $5-$cycle with $tb=-3$. 
This cycle has rotation number $0$. 
The middle $K_5$ in Figure \ref{K5examples}, has one $5-$cycle with  $tb=-3$. 
This cycle has rotation number $\pm 2$, depending on the chosen orientation. 
This middle $K_5$ is the only embedding we have found where all its cycles are unknots $U$ such that $tb(U)+|rot(U)|=-1$.
Thus the Bennequin bound is also sharp for $K_5 (\mathcal{U})$.
The rightmost  $K_5$ in Figure \ref{K5examples}, has two $5-$cycles with  $tb=-3$ (the highlighted cycle and its symmetric about the middle vertical)  and both of these cycles have rotation number 0.


\begin{figure}[htpb!]
\begin{center}
\begin{picture}(400, 60)
\put(0,0){\includegraphics[width=6in]{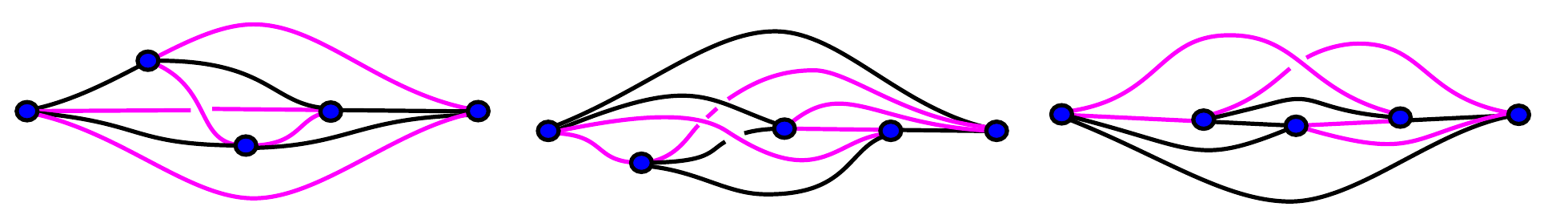}}
\end{picture}
\caption{\small Unknotted minimal embeddings of $K_5$ realizing $s_2$ (left and middle) and $s_3$ (right). The  highlighted cycles have $tb=-3$.}
\label{K5examples}
\end{center}
\end{figure}

\begin{proposition}
Let $\gamma_r$ be an unknotted $r-$cycle in $K_n$ with all $3-$cycles trivial unknots.
Then $$tb(\gamma_r)\ge c_r := \frac{2(s_r^2-1)}{3}-(r-2)s_r,$$ where $s_r=2^{\lfloor\log_2(r-2)\rfloor}$ and $\lfloor a \rfloor$ represents the largest integer not grater than $a$.
\label{bound}
\end{proposition}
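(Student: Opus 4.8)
The plan is to prove the bound by induction on $r$, cutting $\gamma_r$ along a chord into two shorter cycles whose Thurston--Bennequin numbers are already under control. I work throughout with $tb(\gamma)=w(\gamma)-\tfrac12 c(\gamma)$ and with the splitting of $w(\gamma)$ and $c(\gamma)$ into edge, adjacent-edge and non-adjacent-edge contributions used in the proof of Theorem~\ref{TBKn}. The base case is $r=3$: a trivial $3$-cycle has $tb=-1$, and here $s_3=1$, $c_3=-1$, so the bound holds with equality.

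For the inductive step I would fix a chord $d$ of $\gamma_r$ splitting it into an $r_1$-cycle $\gamma_1$ and an $r_2$-cycle $\gamma_2$, with $r_1+r_2=r+2$ and $r_1,r_2$ as nearly equal as possible, and compute $tb(\gamma_r)-tb(\gamma_1)-tb(\gamma_2)$ from the front. Conceptually, $\gamma_r$ is obtained by fusing $\gamma_1$ and $\gamma_2$ along $d$; when the chord is unlinked and unknotted this fusion is a Legendrian connected sum, for which $tb=tb(\gamma_1)+tb(\gamma_2)+1$. In general the two arcs $P,Q$ of $\gamma_r$ cut off by $d$ link, and once the chord's own self-writhe, cusps and crossings with $\gamma_r$ are cancelled against their appearances in $\gamma_1$ and $\gamma_2$, the deviation from additivity is measured by the signed count $w(P,Q)$ of crossings between an edge of $P$ and an edge of $Q$, together with the two cusp corrections at the endpoints of $d$. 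This gives a recursive inequality
\[
tb(\gamma_r)\ \ge\ tb(\gamma_1)+tb(\gamma_2)-\kappa(r_1,r_2),
\]
with $\kappa$ bounding how far the inter-arc linking and the boundary cusps can push $tb(\gamma_r)$ down.

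It then remains to check that $c_r$ solves this recursion, i.e.\ that $\kappa(r_1,r_2)\le c_{r_1}+c_{r_2}-c_r$ for the chosen split. Writing $m=r-2$ and $s_r=2^{\lfloor\log_2 m\rfloor}$, the closed form $c_r=\tfrac{2(s_r^2-1)}{3}-m\,s_r$ is linear in $m$ with slope $-s_r$ on each dyadic block $s_r\le m<2s_r$, and satisfies the doubling relation $c_{2s+2}=2c_{s+2}-\tfrac{2(s^2-1)}{3}$ whenever $s$ is a power of $2$. These two facts are exactly what is needed to solve the recursion: at a balanced step $r=2s+2$ the admissible cost $c_{r_1}+c_{r_2}-c_r$ is precisely the quadratic quantity $\tfrac{2(s^2-1)}{3}$ appearing in the statement. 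Feeding $tb(\gamma_i)\ge c_{r_i}$ from the inductive hypothesis then yields $tb(\gamma_r)\ge c_r$.

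The hard part will be the cost estimate $\kappa$, that is, bounding $w(P,Q)$ from below. Since there are on the order of $r_1 r_2$ pairs of non-adjacent edges across the two arcs, a crossing-by-crossing bound is far too weak to produce the quadratic savings $\tfrac{2(s^2-1)}{3}$; the improvement must be extracted from the triviality of \emph{all} the $3$-cycles meeting the endpoints of $d$, which restricts how the arcs can link. The connected-sum picture also suggests that, because $\gamma_r$ is an unknot, the subcycles $\gamma_1,\gamma_2$ should again be unknots, so that the inductive hypothesis applies to them; but this inheritance, and the quadratic deviation bound, are precisely the points that must be established rigorously. Once they are in hand, verifying that the closed form solves the recursion is a routine dyadic computation.
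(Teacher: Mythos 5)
Your strategy diverges from the paper's at the decisive point, and the divergence creates a gap you yourself flag but do not close. Cutting $\gamma_r$ along a single chord $d$ produces a theta-graph with three cycles $\gamma_r$, $\gamma_1$, $\gamma_2$, and for theta-graphs there is \emph{no} exact relation among the cycle $tb$s (the paper makes this point explicitly when discussing $\theta_n$-graphs in Section 3): the deviation $tb(\gamma_r)-tb(\gamma_1)-tb(\gamma_2)$ contains the full inter-arc writhe $w(P,Q)$, which is not controlled by the hypothesis that the $3$-cycles are trivial --- two arcs of an unknotted cycle can clasp each other arbitrarily many times. Your entire proof therefore rests on the unproven estimate $\kappa(s+2,s+2)\le \tfrac{2(s^2-1)}{3}$, and you offer no mechanism for extracting a quadratic-in-$r$ bound on $w(P,Q)$ from the triviality of the $3$-cycles. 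The same goes for the inheritance of unknottedness by $\gamma_1,\gamma_2$, which is needed to invoke the inductive hypothesis and which is simply false for a general chord of an unknotted cycle. So as written this is a reduction of the proposition to two harder, unestablished claims.

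The paper sidesteps both difficulties by using \emph{two} chords rather than one: it chooses four vertices of $\gamma_k$ so that $\gamma_k$ becomes a subdivided $4$-cycle of a $K_4$ subdivision inside $K_n$, and then invokes the exact identity $TB_3(K_4)=TB_4(K_4)$ from Theorem \ref{TBKn}. That identity is exactly the cancellation you are missing: crossings between non-adjacent edges cancel in pairs when summed over all cycles of $K_4$, so no linking term ever needs to be estimated. The resulting recursion has \emph{four} half-length cycles on one side, namely $tb(\gamma_k)\ge 2+4\,tb(\gamma_{k_1+1})$ for $k=2k_1$ (and the analogous odd version), and the factor $4$ rather than your factor $2$ is precisely what reproduces the quadratic growth of $c_r$ without any correction term. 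If you want to salvage your outline, replace the single chord by the two diagonals of a $4$-cycle through four vertices of $\gamma_r$ placed so the subdivision is balanced; the "routine dyadic computation" at the end of your proposal then goes through essentially as you describe, since it coincides with the paper's verification that the closed form solves the recursion.
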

\begin{proof} By hypothesis, $tb(\gamma_3)=-1$. 
Also $c_3=-1=  \frac{2(s_3^2-1)}{3}-(3-2)s_3,$ since $s_3=1$.
Consider the $K_4$ subgraph determined by the four vertices in a $4-$cycle.
From our discussion in Subsection \ref{subsecK4}, we know that $tb(\gamma_4)\ge -2$.  
Note $c_4=-2= \frac{2(s_4^2-1)}{3}-2s_4$.  

Consider a $5-$cycle, $\gamma_5$.
Think of  $\gamma_5$ as obtained by  adding a vertex along an edge of a $4-$cycle of a $K_4$ subgraph.
The cycles containing this edge gain one unit in length.
We get a subdivision of $K_4$ where the identity 
$TB_3(K_4)= TB_4(K_4)$ gives  $$tb(\gamma_4) +tb(\gamma'_4)+tb(\gamma_3) + tb(\gamma'_3)= tb(\gamma''_4) +tb(\gamma_5)+tb(\gamma'_5). $$
Here $\gamma_i$, $\gamma'_i$ and $\gamma''_i$ are $i-$cycles.
We want to minimize $tb(\gamma_5)$. 
Since $c_3=-1$ and $c_4=-2$, the lefthand side of the equality is at least $-6$. 
The righthand side of the equality is at most $tb(\gamma_5)-2$, when $tb(\gamma''_4)=tb(\gamma'_5)=-1$.
We have $-6 \le tb(\gamma_5)-2$, and so $tb(\gamma_5) \ge -4$. 

We do the same for a $6-$cycle, $\gamma_6$.
Think of  $\gamma_6$ as obtained by adding two vertices  along edges of a $4-$cycle of $K_4$.
These two vertices can be added (1) on the same edge, (2) on two adjacent edges or (3) on two non-adjacent edges. 
In each case, we get a subdivision of $K_4$ where we use the identity $TB_3(K_4)= TB_4(K_4)$. 
On the righthand side of the equality we have the $6-$cycle $\gamma_6$ and two other cycles. 
Since we want to find a lower bound for $tb(\gamma_6)$, we assume that $tb=-1$ for the other two cycles. 
Then the righthand side of the identity is at most $tb(\gamma_6)-2$. 
For each of the three cases the identity $TB_3(K_4)= TB_4(K_4)$ gives:
\begin{enumerate}
\item $tb(\gamma_3) +tb(\gamma'_3)+tb(\gamma_5) + tb(\gamma'_5) \le tb(\gamma_6) -2 $\vspace{1mm}
\item $tb(\gamma_3) +tb(\gamma_4)+tb(\gamma'_4) + tb(\gamma_5) \le tb(\gamma_6) -2 $\vspace{1mm}
\item $tb(\gamma_4) +tb(\gamma'_4)+tb(\gamma''_4) + tb(\gamma'''_4) \le tb(\gamma_6) -2 $
\end{enumerate}
Here the $\gamma_i$s represent $i-$cycles.
Since we added two vertices to various edges of $K_4$, and each edge of $K_4$ appears in two $3-$cycles,  the total length of the four cycles on the lefthand side of the identity is $4\cdot 3 + 2\cdot 2 =16$. 
Using the lower bounds for the $tb$s of $3-$cycles ($c_3=-1$), $4-$cycles ($c_4=-2$) and $5-$cycles ($c_5=-4$) we get 
\begin{enumerate}
\item $-1-1-4-4 \le tb(\gamma_6) -2 \Rightarrow tb(\gamma_6)\ge -8$\vspace{1mm}
\item $-1-2-2-4 \le tb(\gamma_6) -2 \Rightarrow tb(\gamma_6)\ge -7$\vspace{1mm}
\item $-2-2-2-2 \le tb(\gamma_6) -2 \Rightarrow tb(\gamma_6)\ge -6$
\end{enumerate}
So $tb(\gamma_6) \ge -6$, and one can check that $c_6=-6$.

We proceed in the same way in the general case. 
Think of a $k-$cycle $\gamma_k$ as obtained by adding $k-4$ vertices to the edges of a $4-$cycle of $K_4$.
There are many possible choices, and in each case we get a subdivision of $K_4$ where we use the identity $TB_3(K_4)= TB_4(K_4)$.
On the righthand side of the equality we always have the $k-$cycle $\gamma_k$ and two other cycles which we assume have $tb=-1$.
Then the righthand side of the identity is at most $tb(\gamma_k)-2$. 
Since we added $k-4$ vertices to various edges of $K_4$, the total length of the four cycles on the lefthand side of the identity is $4\cdot 3 + 2(k-4) =2k+4$. 

The lefthand side of the identity can take on various forms 
$$S_{n_1, n_2, n_3, n_4}= tb(\gamma_{n_1}) +tb(\gamma_{n_2})+tb(\gamma_{n_3})+ tb(\gamma_{n_4}),$$ with  $n_1+n_2+n_3+n_4 =2k+4, n_i\ge 3$. 
As in the $r=6$ case above, each choice provides a lower bound for $tb(\gamma_k)$.  \\

\noindent{\bf For $k$ even:}  Let $k=2k_1$.  
One lower bound for $tb(\gamma_k)$ is given by $n_1 =n_2=n_3=n_4 =k_1+1$. 
 We obtain this configuration by placing $k_1-2$ vertices each, on two non-adjacent edges of the $4-$cycle.
 Then $tb(\gamma_k)\ge 2 +4 tb(\gamma_{k_1+1})$. 
 We use mathematical induction to prove that $tb(\gamma_k)\ge \frac{2(s_k^2-1)}{3}-(k-2)s_k.$
 Assume 
 $$ tb(\gamma_{k_1+1})\ge  \frac{2(s_{k_1+1}^2-1)}{3}-(k_1-1)s_{k_1+1}. $$
 Then 
 $$tb(\gamma_k))\ge 2 +4 tb(\gamma_{k_1+1})\ge 2+\frac{8}{3} (s_{k_1+1}^2-1)-4(k_1-1)s_{k_1+1}.$$
  We show this last quantity is  equal to $\frac{2}{3}(s_k^2-1)-(k-2)s_k.$
We have  
\[
\begin{array}{cl}
\frac{2}{3}(s_k^2-1)-(k-2)s_k & = \frac{2}{3} (2^{2\lfloor\log_2(2k_1-2)\rfloor}-1)-(2k_1-2)2^{\lfloor\log_2(2k_1-2)\rfloor}  \\\\
  & = \frac{2}{3} (2^{2+2\lfloor\log_2(k_1-1)\rfloor}-1)-2(k_1-1)2^{1+\lfloor\log_2(k_1-1)\rfloor} \\\\
  & =\frac{8}{3} 2^{2\lfloor\log_2(k_1-1)\rfloor}-\frac{2}{3}-4(k_1-1)2^{\lfloor\log_2(k_1-1)\rfloor} \\\\
    & =\frac{8}{3} s_{k_1+1}^2-\frac{2}{3}-4(k_1-1)s_{k_1+1} \\\\
  & = 2+\frac{8}{3} (s_{k_1+1}^2-1)-4(k_1-1)s_{k_1+1}.  
\end{array}
\]
Thus 
$$tb(\gamma_k))\ge  \frac{2}{3}(s_k^2-1)-(k-2)s_k.$$

\noindent{\bf For $k$ odd:}  Let $k=2k_1+1$.  
One lower bound $tb(\gamma_k)$ is given by  $n_1 =n_2 =k_1+1$ and $n_3=n_4 =k_1+2$. 
We obtain this configuration by placing vertices on a pair of non-adjacent edges of the $4-$cycle, $k_1-2$ vertices on one edges and $k_1-1$ vertices on the other edge.
Then $tb(\gamma_k)\ge 2 +2 tb(\gamma_{k_1+1})+ 2 tb(\gamma_{k_1+2})$. 
Assume $$ tb(\gamma_{k_1+1})\ge  \frac{2(s_{k_1+1}^2-1)}{3}-(k_1+1-2)s_{k_1+1} \textrm{ and } $$ 
$$ tb(\gamma_{k_1+2})\ge  \frac{2(s_{k_1+2}^2-1)}{3}-(k_1+2-2)s_{k_1+2}.$$
Then $$tb(\gamma_k))\ge 2 +\frac{4(s_{k_1+1}^2-1)}{3}-2(k_1-1)s_{k_1+1}+ \frac{4(s_{k_1+2}^2-1)}{3}-2k_1s_{k_1+2}.$$  
We show this last quantity is equal to  $$\frac{2}{3}(s_k^2-1)-(k-2)s_k.$$
\begin{itemize}
  \item  For $k_1=2^t$, $\lfloor\log_2(k_1-1)\rfloor =t-1$ and $\lfloor\log_2(2k_1-1)\rfloor = t$. 
  So we have $s_{k_1+1}=2^{t-1}$, $s_{k_1+2}=2^t$ and $s_{k}=2^t$.
  One can check that the two quantities are both equal to $-\frac{2}{3}+2^t-\frac{4}{3}2^{2t}$. \\
  \item  For $2^t <k_1<2^{t+1}$ we have $2^{t+1}-1 <2k_1-1<2^{t+2}-1$. 
    Then  $\lfloor\log_2k_1\rfloor=t$, $\lfloor\log_2(k_1-1)]\rfloor=t$ and $\lfloor\log_2(2k_1-1)\rfloor = t+1$. 
    So $s_{k_1+1}=2^t$, $s_{k_1+2}=2^t$ and $s_{k}=2^{t+1}$.
    One can check that the two quantities are both equal to $-\frac{2}{3}+\frac{8}{3}2^{2t}- (4k_1-2)2^{t}$. \\
    \end{itemize}

\end{proof}

\section{The graph Thurston-Bennequin number for complete bipartite graphs}\label{GTBB}

This section is concerned with the $TB$ of complete bipartite graphs $K_{n,m}$. 
For such a graph, denote by $P$ and $Q$ the subsets of vertices in the $n-$partition and $m-$partition, respectively.
Let  $w_{ae[P]}(K_{n,m})$ denote the total signed sum of crossings over all pairs of edges adjacent to a vertex in $P$. 
Let  $c_{v[P]}(K_{n,m})$ denote the total number of cusps at vertices in $P$, taken over all pairs of adjacent edges.

\begin{theorem} Let $K_{n,m}$ be a Legendrian embedding of a complete bipartite graph in $(\R^3, \xi_{std})$, with $n\ge m\ge 3$.  
Then $TB_{2r}(K_{n, m})$ is a multiple of $TB_4(K_{n, m})$. 
As a consequence, $TB(K_{n, m})$ is a multiple of $TB_4(K_{n, m})$. 
The quantities $TB_i(K_{n,m})$, $4\le i\le m$, can be computed from writhe and cusp counts of vertices and edges in the front projection rather than summing the $tbs$ of the cycles.
\label{Knm}
\end{theorem}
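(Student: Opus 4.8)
The plan is to mimic the structure of the proof of Theorem~\ref{TBKn} for complete graphs, replacing $3$-cycles by $4$-cycles, since in a bipartite graph all cycles have even length and the shortest cycles are $4$-cycles. The key object is the sum $\sum_{\gamma\in\Gamma_{2r}} w(\gamma)$, which I would decompose according to the type of crossing contribution: an edge with itself, a pair of adjacent edges, and a pair of non-adjacent edges. For the cusp sum I would similarly split into cusps along a single edge and cusps at a vertex. Because $K_{n,m}$ is bipartite, adjacency of edges is governed by whether they share an endpoint in $P$ or in $Q$, which is why the refined quantities $w_{ae[P]}$, $w_{ae[Q]}$, $c_{v[P]}$, $c_{v[Q]}$ appear separately in the statement of $TB_4(K_{n,m})$; I would keep these two cases distinct throughout.

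The heart of the argument is a set of counting lemmas analogous to items (1)--(3) in the proof of Theorem~\ref{TBKn}. First I would count, for a fixed edge $e$, how many $2r$-cycles contain $e$, and likewise for a fixed pair of edges adjacent at a $P$-vertex (respectively a $Q$-vertex), and for a fixed pair of non-adjacent edges. A $2r$-cycle in $K_{n,m}$ alternates between $P$ and $Q$, using $r$ vertices from each side, so these counts are products of falling factorials in $n$ and $m$; I expect the multiplicity relating $TB_{2r}$ to $TB_4$ to come out as $\frac{(m-2)!(n-2)!}{(m-r)!(n-r)!}$, matching the factor in the displayed formula for $TB(K_{n,m})$. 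Second, I would show the non-adjacent edge contributions cancel, by the same pairing trick used in item~(3): cycles containing two vertex-disjoint edges $ab$ and $cd$ come in pairs that traverse one of the edges in opposite directions, so their signed crossing contributions are negatives of one another and sum to zero. This lets me drop $w_{ne}$ from the multiplicative identity even though it appears in the standalone expression for $TB_4$.

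Once the counting is in place, I would assemble the writhe identity $\sum_{\gamma\in\Gamma_{2r}} w(\gamma) = \frac{(m-2)!(n-2)!}{(m-r)!(n-r)!}\sum_{\gamma\in\Gamma_4} w(\gamma)$ and the parallel cusp identity, then combine them via $tb(\gamma)=w(\gamma)-\tfrac12 c(\gamma)$ to obtain $TB_{2r}(K_{n,m}) = \frac{(m-2)!(n-2)!}{(m-r)!(n-r)!}\,TB_4(K_{n,m})$. Summing over $r$ from $2$ to $m$ yields the stated formula for $TB(K_{n,m})$, and expanding $TB_4(K_{n,m})$ in terms of the edge and vertex counts gives the fully diagrammatic expression. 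The main obstacle I anticipate is getting the combinatorial multiplicities exactly right for the two-sided (bipartite) adjacency bookkeeping: because edges adjacent at a $P$-vertex and edges adjacent at a $Q$-vertex sit in different numbers of cycles, the coefficients $(n-1)$ versus $(m-1)$ attached to $w_{ae[P]}$ and $w_{ae[Q]}$ (and to $c_{v[P]}$, $c_{v[Q]}$) must be derived carefully rather than by symmetry. I would verify these counts directly against the $r=2$ base case, where $\Gamma_4$ is counted exactly, before trusting the general inductive multiplicity.
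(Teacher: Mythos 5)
Your overall framework (decompose the writhe sum by crossing type, count cycle multiplicities with falling factorials, keep the $P$-adjacent and $Q$-adjacent cases separate) matches the paper, and your edge and adjacent-edge counts are in the right spirit. But there is a genuine error at the step you flag as the ``pairing trick'': in the bipartite case the non-adjacent edge contributions do \emph{not} cancel, and importing the cancellation from item (3) of the complete-graph proof breaks the theorem. In $K_n$ the pairing works because for two vertex-disjoint edges $ab$ and $cd$, both orderings $\dots ab \dots cd \dots$ and $\dots ab \dots dc \dots$ are realized by equally many cycles, so the signed crossings cancel exactly. In $K_{n,m}$ a $2r$-cycle must alternate between $P$ and $Q$, so for non-adjacent edges $p_1q_1$ and $p_2q_2$ the direction in which $p_2q_2$ is traversed (relative to $p_1 \to q_1$) is forced by the parity of the slot it occupies: of the $2r-3$ admissible positions for the second edge, $r-1$ give one relative orientation and $r-2$ give the opposite. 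The cancellation is therefore only partial, leaving a net contribution of exactly $\frac{(m-2)!(n-2)!}{(m-r)!(n-r)!}$ times the contribution of that pair to the unique $4$-cycle containing both edges --- which is precisely the same multiplier $M$ as for the other terms. This is what makes $TB_{2r} = M \cdot TB_4$ come out uniformly.

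Your own write-up contains the symptom of the problem: you propose to ``drop $w_{ne}$ from the multiplicative identity even though it appears in the standalone expression for $TB_4$.'' If the non-adjacent crossings really vanished from $\sum_{\gamma\in\Gamma_{2r}} w(\gamma)$ while contributing with coefficient $1$ to $\sum_{\gamma\in\Gamma_4} w(\gamma)$, you would get $TB_{2r} = M\bigl(TB_4 - w_{ne}\bigr)$, not $M\cdot TB_4$, and the claimed divisibility would fail for any front projection with $w_{ne}(K_{n,m}) \neq 0$. The fix is the parity count above (carried out in item (3) of the paper's proof): verify that the $2r-3$ placements split as $(r-1) + (r-2)$ by orientation, so the non-adjacent term survives with exactly the coefficient $M$ and folds into the identity rather than dropping out. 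Your proposed sanity check at $r=2$ would not catch this, since at $r=2$ there is only one placement and no cancellation to test.
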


\begin{proof}
We consider the writhe and number of cusps in a Legendrian front projection for $K_{n, m}$. 
For the writhe, we consider crossings of an edge with itself, crossings between adjacent edges and crossings between non-adjacent edges.
To compute the number of cusps, we look at cusps along each edge and at cusps occurring at the vertices (between a pair of adjacent edges).

\begin{enumerate}
\item Each edge appears in $(n-1)(m-1)$ cycles of length 4 and  in $\frac{(n-1)!(m-1)!}{(n-r)!(m-r)!}$ cycles of length $2r$.
\item Each pair of adjacent edges appears in $(n-1)$ cycles of length 4 if the two edges are adjacent at a vertex in $P$ (the $n-$partition). 
 Each pair of adjacent edges appears in $(m-1)$ cycles of length 4 if the two edges are adjacent at a vertex in $Q$ (the $m-$partition). 
 Each pair of adjacent edges appears in $\frac{(m-2)!(n-1)!}{(m-r)!(n-r)!}$ cycles of length $2r$ if the two edges are adjacent at a vertex in $P$. 
 Each pair of adjacent edges appears in$\frac{(m-1)!(n-2)!}{(m-r)!(n-r)!}$ cycles of length $2r$ if the two edges are adjacent at a vertex in $Q$. 
 
 \item Each pair of non-adjacent edges appears in one cycle of length 4 and in \\$(2r-3)\frac{(m-2)!(n-2)!}{(m-r)!(n-r)!}$ cycles of length $2r$.
 We  obtain this count as follows: 
 The $r-2$ vertices in each partition that are in the cycle and not in one of the two non-adjacent edges can be chosen in ${n-2 \choose r-2}{m-2 \choose r-2} $ ways.  
 Without loss of generality choose one of the edges of interest to start constructing the cycle.  
 There are $2r-3$ positions where the second edge of interest can be placed in a $2r-$cycle, since it cannot be adjacent.  
 See the first row of in Figure \ref{Non-adjacent-edges}.  
 Each of the $r-2$ vertices in each partition can occupy $r-2$ remaining positions relative to the chosen two non-adjacent edges.  
 There are $(r-2)!(r-2)!$ ways to place these remaining vertices.  
 
 Fix an arbitrary embedding of $K_{n,m}$.  
 In $(r-1)\frac{(m-2)!(n-2)!}{(m-r)!(n-r)!}$ of these cycles, the non-adjacent edges intersect with one orientation, the same orientation as their intersection in the $4-$cycle. 
 In the other  $(r-2)\frac{(m-2)!(n-2)!}{(m-r)!(n-r)!}$ cycles, they intersect with the opposite orientation. 
 See the second row of Figure \ref{Non-adjacent-edges}.
 This means that the contribution of the two edges to the writhe comes from $\frac{(m-2)!(n-2)!}{(m-r)!(n-r)!}$ of the $2r-$cycles.
 For each of these cycles, the contribution is the same as the one from the $4-$cycle containing the two non-adjacent edges.

 
 \begin{figure}[htpb!]
\begin{center}
\begin{picture}(360, 80)
\put(0,0){\includegraphics[width=5.5in]{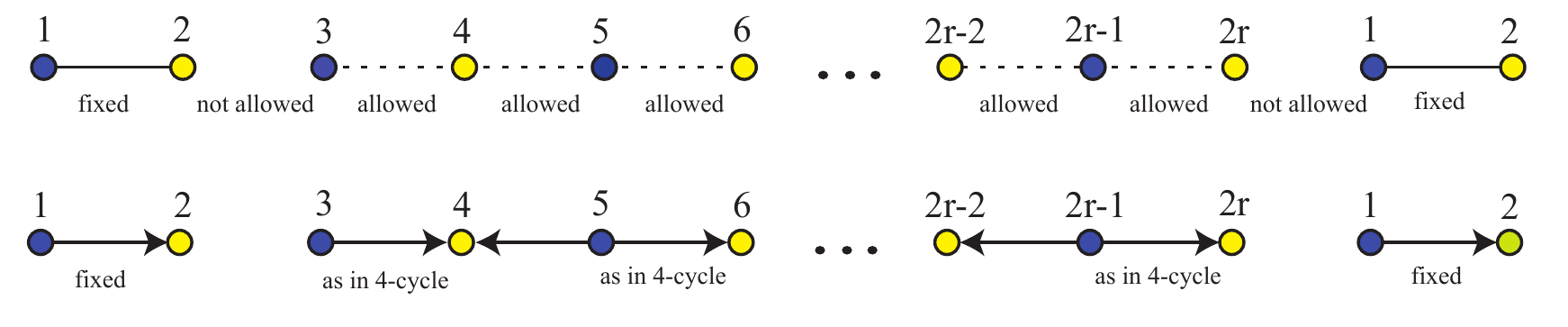}}
\end{picture}
\caption{\small Placement of two non-adjacent edges in a  $2r-$cycle.}
\label{Non-adjacent-edges}
\end{center}
\end{figure}

\end{enumerate}

Items (1)--(3) above give

$$TB_{2r}(K_{n,m})=TB_4(K_{n,m}) \cdot \frac{(m-2)!(n-2)!}{(m-r)!(n-r)!} .$$

Adding over all cycles gives
$$TB(K_{n,m})=   TB_4(K_{n,m})\cdot \ \sum_{r=2}^{m} \frac{(m-2)!(n-2)!}{(m-r)!(n-r)!} . $$
We can also compute $TB_4(K_{n,m})$ from the writhe and cusp count for edges and vertices as follows:\\\\
$TB_4(K_{n,m})=$
\[
\begin{array}{cl}
  & (n-1)(m-1)w_e(K_{n,m})+(n-1)w_{ae[P]}(K_{n,m})+(m-1)w_{ae[Q]}(K_{n,m})+w_{ne}(K_{n,m}) \\\\
  & -\frac{1}{2}[(n-1)(m-1)c_e(K_{n,m})+(n-1)c_{v[P]}(K_{n,m})+(m-1)c_{v[Q]}(K_{n,m})].
\end{array}
\]

\end{proof}


Here we describe a way to realize a minimal embedding of $K_{n,m}$.
See Figure \ref{fig-Knm}. 
Place all vertices of $K_{n,m}$ on the same horizontal line in the front projection, with the vertices in one partition first.
Then place the edges adjacent to a given vertex as nested arcs, with no intersections between adjacent edges.
See the left image of Figure \ref{fig-Knm} for such an embedding of $K_{5,3}.$
All $4-$cycles in this embedding have a front projection like that of the unknot on the right in the picture and therefore are trivial unknots.


\begin{figure}[htpb!]
\begin{center}
\begin{picture}(720, 90)
\put(0,0){\includegraphics[width=6.3in]{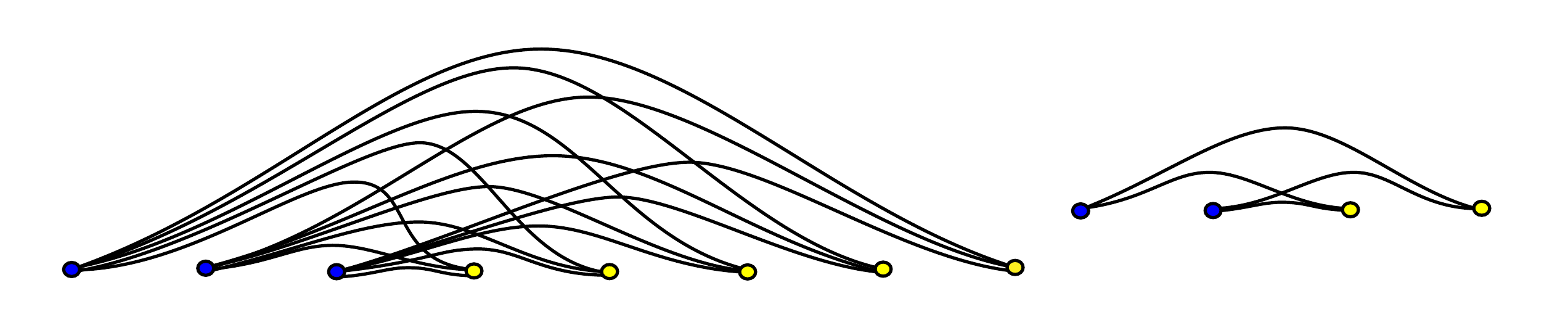}}
\end{picture}
\caption{\small Minimal embedding of $K_{5,3}$ and a $4-$cycle in this embedding.}
\label{fig-Knm}
\end{center}
\end{figure}

\begin{proposition}
For an unknotted minimal Legendrian $K_{3,3}$, three of its cycles of length six have $tb=-2$ and the other three have $tb=-1$.
Additionally, any pair of $6-$cycles with the same $tb$ share three non-adjacent edges, and any pair of $6-$cycles with different $tbs$ share two sets of adjacent edges.  
\end{proposition}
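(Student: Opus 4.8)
My plan is to recast the statement in terms of the six $6$-cycles and feed in Theorem~\ref{Knm}. First I would record the bijection between $6$-cycles and perfect matchings of $K_{3,3}$: the complement of a $6$-cycle is a perfect matching (each vertex omits exactly one of its three edges), and conversely the complement of a perfect matching is a $2$-regular bipartite graph on six vertices, hence a single $6$-cycle. The six matchings split, by the parity of the associated permutation, into two classes, each of which is a $1$-factorization, i.e. a partition of $E(K_{3,3})$ into three perfect matchings. Two matchings in the same class are disjoint, so the two cycles share the third matching of that class, namely three pairwise non-adjacent edges; two matchings in different classes meet in exactly one edge, so the two cycles share four edges, which one checks form two paths of length two (two sets of adjacent edges). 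This is pure graph theory and already supplies the edge-sharing dichotomy, and it shows the whole proposition is equivalent to the single assertion that $tb$ is constant on each of the two classes.

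Next I would compute the total. Theorem~\ref{Knm} with $n=m=3$ and $r=3$ gives $TB_6(K_{3,3})=TB_4(K_{3,3})$, and in an unknotted minimal embedding all nine $4$-cycles are trivial unknots, so $TB_4(K_{3,3})=-9$; hence the six $tb$s sum to $-9$. The point is that this total plus the reduction above does the rest: if $tb$ equals $x$ on one class and $y$ on the other, then $3x+3y=-9$, i.e. $x+y=-3$ with $x,y\le -1$ (each $6$-cycle is an unknot), whose only integer solution is $\{x,y\}=\{-1,-2\}$. Thus constancy automatically forces three cycles with $tb=-2$ and three with $tb=-1$ and different values on the two classes, and the edge-sharing statements then follow from the first paragraph. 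So the entire proposition reduces to proving that $tb$ is constant on each class.

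For constancy I would use the intrinsic writhe--cusp decomposition behind Theorem~\ref{Knm}: for any cycle $\gamma$, $tb(\gamma)=\sum_{e\in\gamma}a_e+\sum_{\{e,f\}\subseteq\gamma}p_{ef}$, where $a_e=w(e)-\tfrac12 c(e)$ and $p_{ef}$ is the pair contribution (equal to $w(e,f)-\tfrac12 c(e,f)$ for adjacent edges and to $w(e,f)$ for non-adjacent edges); these depend only on the edges, not on the cycle. Two cycles $C,C'$ in the same class differ by trading one matching of the class for another while retaining the third, so $tb(C)-tb(C')$ is an explicit linear expression in the $a_e$ and $p_{ef}$. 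I would then try to realize this difference as a combination of the nine quantities $tb(D)$, $D$ a $4$-cycle, using the clean fact that in a $1$-factorization every $4$-cycle has exactly one monochromatic diagonal, so the $4$-cycles are indexed by a pair (color, non-adjacent pair); since every $tb(D)=-1$, all their differences vanish.

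The hard part will be precisely this last bookkeeping. The self-contributions $a_e$ match easily, but the pair terms are delicate: a naive choice of combination (e.g. weighting $4$-cycles $\pm1$ by their monochromatic color) already fails on the non-adjacent pair contributions, and the half-integer cusp terms $\tfrac12 c(e,f)$ make a straight coefficient match subtle. I expect one must use the explicit form of $TB_4(K_{3,3})$ together with a parity argument on cusp counts, or else first establish a lower bound $tb\ge -2$ for an unknotted $6$-cycle (an analog of Proposition~\ref{bound}, complicated here by the absence of triangles) to pin down the $(3,3)$ distribution and then separately rule out two equal-$tb$ cycles lying in different classes. Showing that the $tb$ value is genuinely class-invariant for \emph{every} unknotted minimal embedding, not merely for the explicit nested-arc model, is the crux.
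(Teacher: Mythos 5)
Your combinatorial setup is correct and in fact cleaner than the paper's: the bijection between $6$-cycles and perfect matchings, the split into two parity classes (each a $1$-factorization), and the resulting edge-sharing dichotomy are all right, and your reduction via $TB_6(K_{3,3})=TB_4(K_{3,3})=-9$ correctly shows that everything follows once $tb$ is known to be constant on each class. But that constancy is exactly what you leave unproven, and the routes you sketch for it (matching writhe--cusp contributions of $6$-cycles against combinations of $4$-cycle $tb$s, or a standalone lower bound $tb\ge -2$) either fail, as you note, or would still leave the second half of the proposition --- that the three $tb=-2$ cycles form one class --- unaddressed. So there is a genuine gap at the crux.

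The missing idea in the paper is an edge-deletion argument that transfers the already-established $K_4$ result. Delete a single edge $e$ of $K_{3,3}$: the result is a subdivision of $K_4$ whose $3$-cycles are $4$-cycles of $K_{3,3}$ (hence trivial unknots by hypothesis) and whose three $4$-cycles are one $4$-cycle of $K_{3,3}$ (so $tb=-1$) and the two $6$-cycles of $K_{3,3}$ avoiding $e$, i.e.\ the two whose complementary matchings contain $e$. The identity $TB_3(K_4)=TB_4(K_4)$ applied to this subdivision, exactly as in Subsection~\ref{subsecK4}, forces precisely one of its three $4$-cycles to have $tb=-2$ and the other two to have $tb=-1$; since the genuine $4$-cycle already has $tb=-1$, the two $6$-cycles get $tb=-1$ and $tb=-2$ in some order. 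In your language: two matchings sharing an edge lie in different parity classes, so any two $6$-cycles from different classes have different $tb$s, each in $\{-1,-2\}$. That immediately yields constancy on each class, and deleting the three edges at a fixed vertex exhibits all six $6$-cycles in pairs, giving the $3$--$3$ count. Your paragraph~1 then supplies the edge-sharing conclusions. With this one lemma inserted, your argument closes up and is essentially equivalent to (and somewhat more systematic than) the paper's proof.
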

\label{prop_K33}
\begin{proof} For $n=m=3$ and $r=3$, Theorem \ref{Knm} says 
$$TB_{6}(K_{3,3})= TB_4(K_{3,3}).$$
The graph $K_{3,3}$ has nine cycles of length 4 and six cycles of length 6. If all nine $4-$cycles are of maximal $tb=-1$, then the sum of $tb$s over all six cycles is $-9$.
So there are at most three $6-$cycles with $tb=-2$.  
In the following we use our understanding of embeddings of $K_4$ to show that there are three $6-$cycles with $tb=-2$.
If we delete one of the edges of $K_{3,3}$, we obtain a subdivision of $K_4$, call it $K$.  
We will describe $K$ as a $K_4$ graph and ignore the valence 2 vertices, to simplify the explanation.    
 The $3-$cycles of $K$ are $4-$cycles of $K_{3,3}$. 
 See Figure \ref{K33sub4cyc}.
 
 
 \begin{figure}[htpb!]
\begin{center}
\begin{picture}(280, 110)
\put(0,0){\includegraphics[width=3.5in]{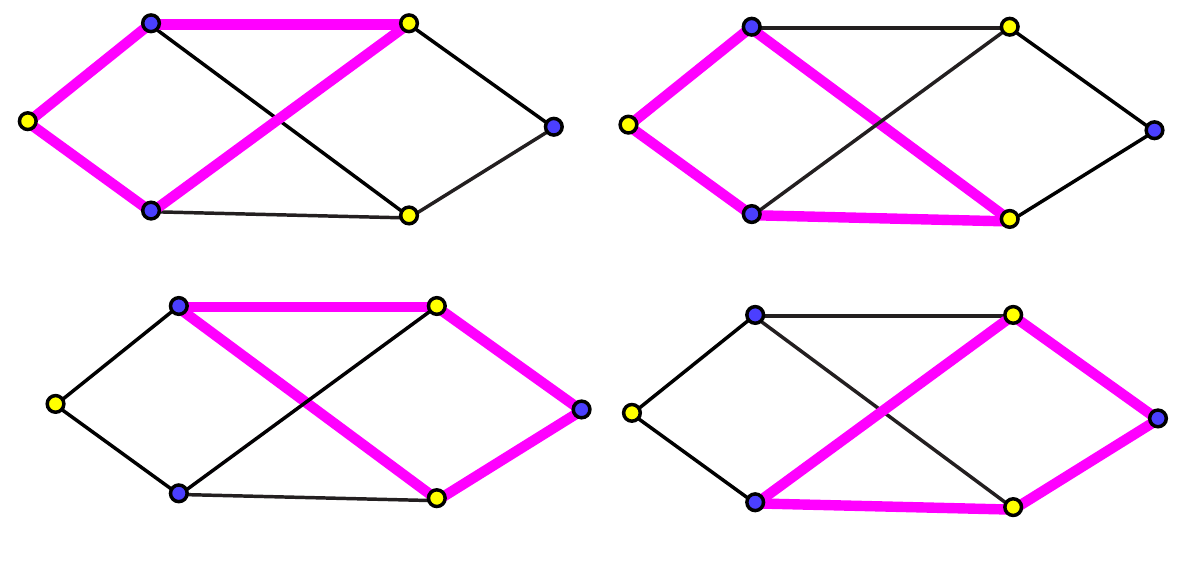}}
\end{picture}
\caption{\small The $3-$cycles of the $K_4$ subdivision correspond to $4-$cycles of $K_{3,3}$.}
\label{K33sub4cyc}
\end{center}
\end{figure}

\noindent One $4-$cycle of $K$ is a $4-$cycle of $K_{3,3}$, while the other two $4-$cycles of $K$ are $6-$cycles of $K_{3,3}$.
See Figure \ref{K33sub6cyc}.

 
 \begin{figure}[htpb!]
\begin{center}
\begin{picture}(300, 70)
\put(0,0){\includegraphics[width=4.5in]{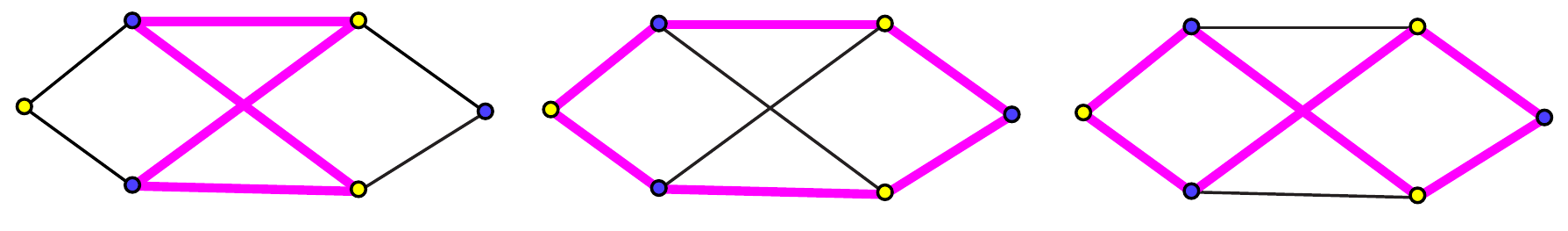}}
\end{picture}
\caption{\small The $4-$cycles of the $K_4$ subdivision correspond to a $4-$cycle of $K_{3,3}$ (left) and two $6-$cycles of $K_{3,3}$ (middle, right).}
\label{K33sub6cyc}
\end{center}
\end{figure}

By assumption, all $4-$cycles of $K_{3,3}$ have maximal $tb=-1$, so all $3-$cycles of  $K$ have maximal $tb=-1$. 
This means exactly one of the $4-$cycles of $K$ has $tb=-2$, with the other cycles having $tb=-1$.  
The $4-$cycles of $K$ are coming from a $4-$cycle of $K_{3,3}$ and two $6-$cycles of $K_{3,3}$.  
Since all $4-$cycles of $K_{3,3}$ have maximal $tb=-1$, one of the two $6-$cycles must have $tb=-2$ and the other must have $tb=-1$.  

All  edges of $K_{3,3}$ are equivalent up to graph automorphism.  
So all  $K_4$ subdivisions obtained by deleting a single edge of $K_{3,3}$  have the same structure.  
The set of three $K_4$ subdivisions shown in Figure \ref{K33subs} contain all of the $6-$cycles of $K_{3,3}$, with each cycle appearing once.  
So for each subdivision there is a different $6-$cycle with $tb=-2$.  
Thus there are exactly three $6-$cycles with $tb=-2$ and three $6-$cycles with $tb=-1$.


\begin{figure}[htpb!]
\begin{center}
\begin{picture}(300, 70)
\put(0,0){\includegraphics[width=4.5in]{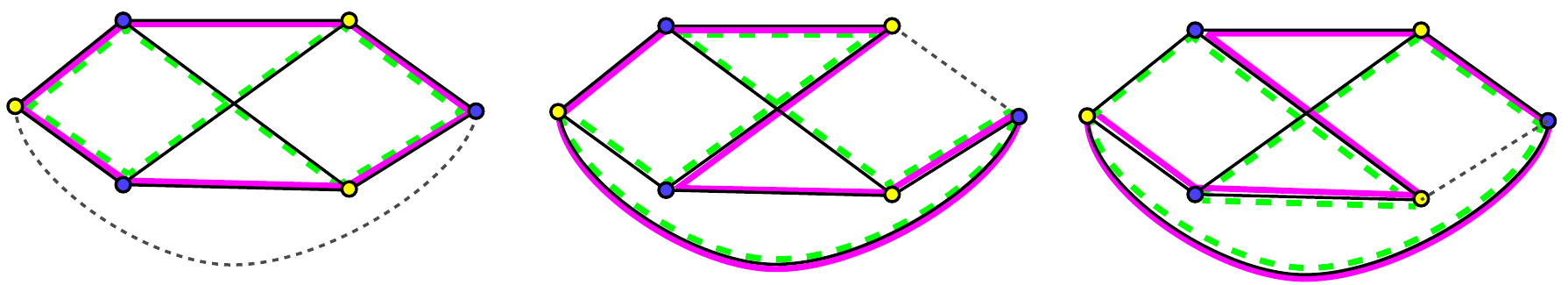}}
\end{picture}
\caption{\small By deleting the three edges adjacent to a vertex we see the six $6-$cycles of $K_{3,3}$ as $4-$cycles in the subdivision of $K_4$.}
\label{K33subs}
\end{center}
\end{figure}

Since all of the $K_4$ subdivisions obtained by deleting a single edge of $K_{3,3}$  have the same structure, any pair of $6-$cycles with different $tbs$ will have the same structure as those in Figure \ref{K33sub6cyc}.  
Thus, any two $6-$cycles with different $tbs$ will share two pairs of adjacent edges.  
Let $\gamma_6$ be an arbitrary $6-$cycle in an embedding of $K_{3,3}$.  
Consider the set of three $6-$cycles that share two pairs of adjacent edges with $\gamma_6$.  
See Figure \ref{K33proof}.
The cycles in this set will all have the same $tbs$.  
Any pair of these $6-$cycles with same $tbs$ share three non-adjacent edges.  


\begin{figure}[htpb!]
\begin{center}
\begin{picture}(450, 80)
\put(0,0){\includegraphics[width=6in]{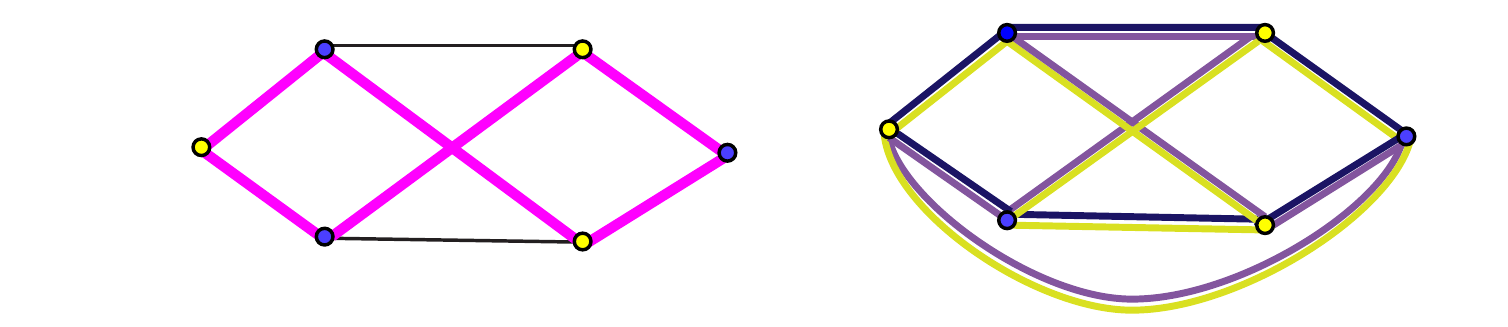}}
\end{picture}
\caption{\small The 6-cycle on the left shares two pairs of adjacent edges with each of three 6-cycles on the right. }
\label{K33proof}
\end{center}
\end{figure}

\end{proof}

From Proposition \ref{prop_K33}, we understand the structure of an unknotted minimal embedding of $K_{3,3}$ well.  
Up to relabelling the cycles, this gives one possible $tb(K_{3,3})$ for an unknotted minimal embedding.  
Since unknots with $tb=-2$ and $tb=-1$ are unique, there is a unique $tb$ and $|rot|$ for an unknotted minimal embedding of $K_{3,3}$.  
In Figure \ref{K33examples}, we give an unknotted minimal embedding of $K_{3,3}.$
In Section \ref{questions}, we show this embedding is equivalent to the one described after Theorem \ref{Knm}.

\begin{remark}
In a $K_{3,n}$ the vertices of a $6-$cycles define a unique $K_{3,3}$ subgraph.
Thus Proposition \ref{prop_K33} implies for unknotted minimal embeddings of $K_{3,n}$, half of the $6-$cycles have $tb=-2$ and half of them have $tb=-1$.
\end{remark}

\begin{figure}[htpb!]
\begin{center}
\begin{picture}(200, 70)
\put(0,0){\includegraphics[width=2.7in]{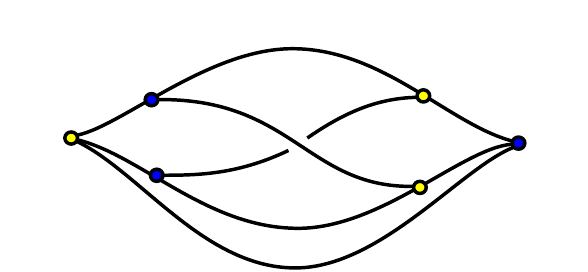}}
\end{picture}
\caption{\small An unknotted minimalembedding of $K_{3,3}$.}
\label{K33examples}
\end{center}
\end{figure}

Here we take a moment to consider complete bipartite graphs $K_{n,m}$, with $n\ge m$ and $m<3.$
There are no cycles in the complete bipartite graphs $K_{n,1},$ so they are of little interest.  
The complete bipartite graphs with $m=2$, i.e. $K_{n,2}$ are subdivisions of the $\theta_n-$graphs.  
For the $\theta_n-$graphs, the smallest cycles are $2-$cycles.  
These are the only cycles. 
So there cannot be a nice relationship between the cycles, like that seen earlier.  
For completeness we give a formula for the $TB$ of a $\theta_n-$graph in terms of writhe and cusp counts of vertices and edges in the front projection.  
We consider the writhe and number of cusps in a diagram for a $\theta_n-$graph, call the diagram $\theta_n$. 
In a $\theta_n-$graph there are no non-adjacent edges.
Each edge appears in $(n-1)$ cycles and each pair of adjacent edges makes up one of the cycles.
This gives 
$$TB(\theta_n)=(n-1)w_e(\theta_n)+w_{ae}(\theta_n)-\frac{1}{2}[(n-1)c_e(\theta_n)+c_v(\theta_n)].$$


\section{Questions and examples of embeddings}\label{questions}
In this section we consider minimal embeddings of $K_4$, $K_5$ and $K_{3,3}$.  
We show the equivalence of diagrams of unknotted minimal embeddings discussed earlier.  
First we will recall the Reidemeister moves for Legendrian graphs.  
Two generic front projections of a Legendrian graph are related by Reidemeister moves I, II and III together with three moves given by the mutual position of vertices and edges \cite{BI}. See Figure~\ref{moves}.


\begin{figure}[htpb!]
\begin{center}
\begin{picture}(389, 170)
\put(0,0){\includegraphics[width=5.5in]{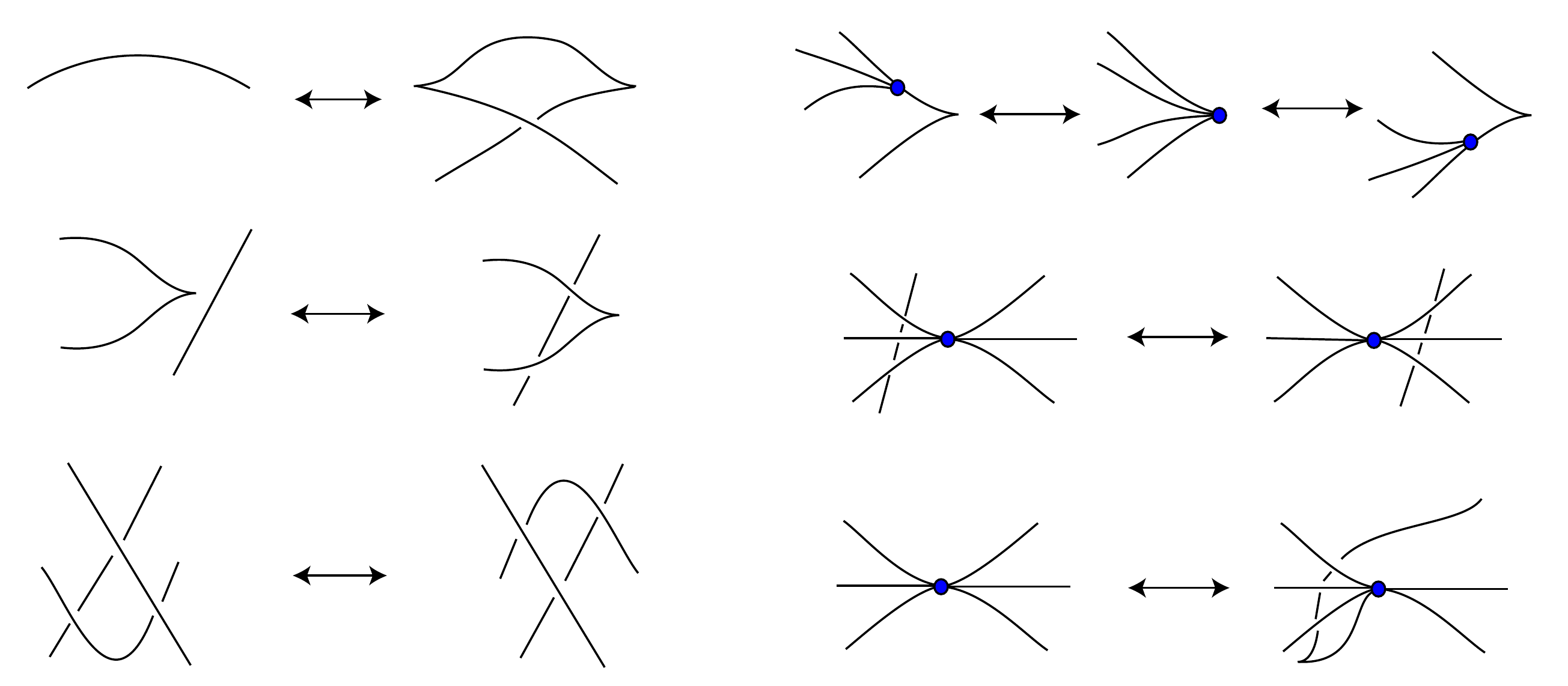}}
\put(84, 153){I}
\put(82,99){II}
\put(80,33){III}
\put(256, 150){IV}
\put(328, 151){IV}
\put(295,93){V}
\put(291,30){VI}
\end{picture}
\caption{\small Legendrian isotopy moves for graphs:  Reidemeister moves I, II, and III, a vertex passing through a cusp (IV), an edge passing under or over a vertex (V), an edge adjacent to a vertex rotates to the other side of the vertex (VI). Reflections of these moves that are Legendrian front projections are also allowed.}\label{moves}
\end{center}
\end{figure}

In Figure \ref{K4examples}, we show two diagrams for the one unknotted minimal embedding of $K_4$ that is known.  
In Figure \ref{K4flat}, we show four diagrams of unknotted minimal embeddings of $K_4$ without crossings.  
\begin{figure}[htpb!]
\begin{center}
\includegraphics[width=3.5in]{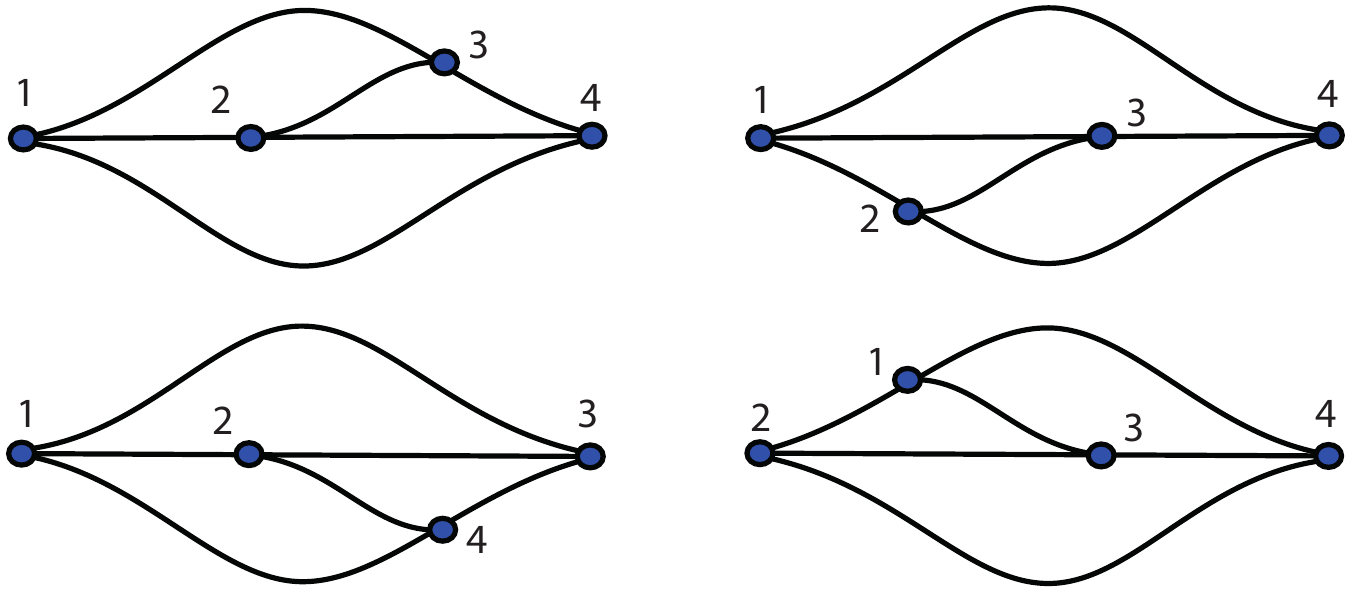}
\caption{\small Four unknotted minimal embeddings of $K_4$.}
\label{K4flat}
\end{center}
\end{figure}
To go between the top and bottom diagrams, in each column of Figure \ref{K4flat}, it takes two Reidemeister IV moves (the vertices are number to make it easier to see how this is done).  
In Figure \ref{K4flat-flat}, we show how to go between the two diagrams in the top row of Figure \ref{K4flat}.
Thus all diagrams in Figure \ref{K4flat} are equivalent.
\begin{figure}[htpb!]
\begin{center}
\includegraphics[width=6in]{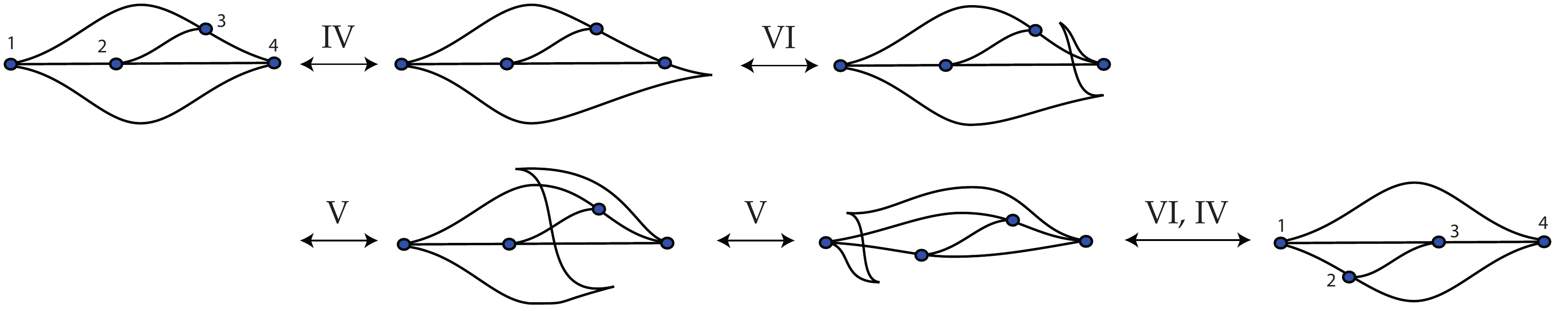}
\caption{\small A sequence of Legendrian $K_4$s related by Reidemeister moves. }
\label{K4flat-flat}
\end{center}
\end{figure}

\noindent Finally in Figure \ref{K4other-flat}, we give the more complicated sequence showing that the left diagram from Figure \ref{K4examples} is equivalent with the final diagram in Figure \ref{K4flat-flat}.   
Thus the diagrams in Figure \ref{K4examples} are equivalent.

\begin{figure}[htpb!]
\begin{center}
\includegraphics[width=6in]{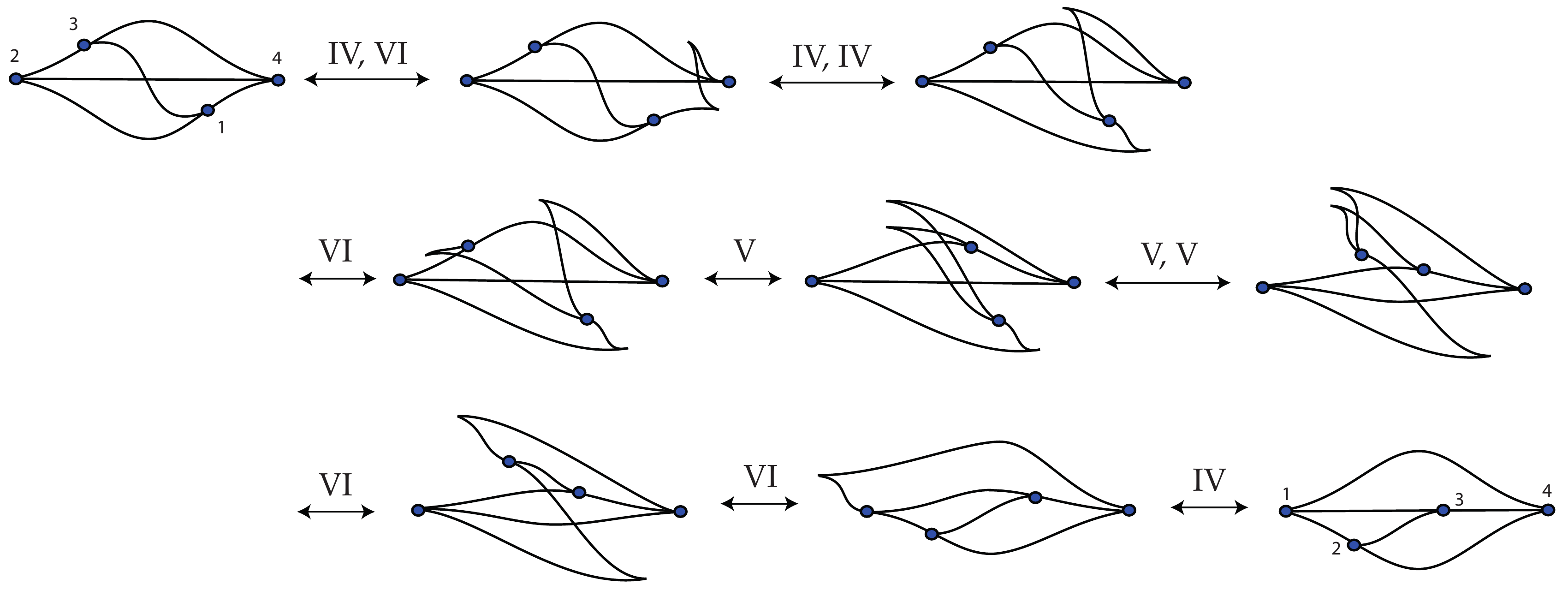}
\caption{\small A sequence of Legendrian $K_4$s related by Reidemeister moves. }
\label{K4other-flat}
\end{center}
\end{figure}
We do not know of a different unknotted minimal embedding of $K_4$, which leads us to the following question:
\begin{question}
Is this the unique unknotted minimal embedding of $K_4$?
\end{question}

\noindent If we consider minimal embeddings rather than unknotted minimal embeddings there are a number of other possibilities.
In Figure \ref{K4-k-example}, we give an infinite family of examples.  
For each $k\in\Z$ odd, this is a minimal embedding of $K_4$ where:
\begin{itemize}
\item one $4-$cycle is an unknot with $tb=-1$,
\item one $4-$cycle is an unknot with $tb=-k-1$ ($rot=\pm 1$), and
\item one $4-$cycle is a $(2,k)-$torus knot with $tb=k-2$ ($rot=0$).
\end{itemize}  

\begin{figure}[htpb!]
\begin{center}
\includegraphics[width=3in]{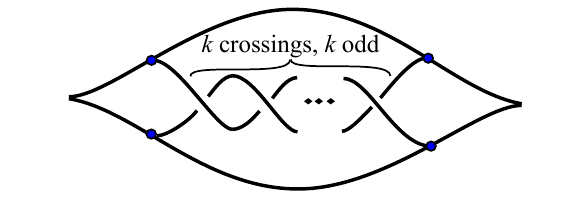}
\caption{\small Legendrian $K_4$ with one $4-$cycle a $(2,k)-$torus knot with $tb=k-2$. }
\label{K4-k-example}
\end{center}
\end{figure}

\begin{question}
Is there an embedding of $K_4(L)$ where $tb_{max}(L)=0$ and all cycles have maximal $tb$?
\end{question}

For $K_5,$ we have shown three different unknotted minimal embeddings, two with the sequence $s_2$ for the $tbs$ of the $5-$cycles and one with $s_3$ for the $tbs$ of the $5-$cycles.  
However, based on our calculations there are many other possible minimal embeddings.  
\begin{question}
Are there unknotted minimal embeddings of $K_5$ realizing the sequence $s_3$  different than the one in Figure \ref{K5examples}? (One way this could occur is if one or both $5-$cycles with $tb=-3$ had rotation number $\pm 2$.)

\end{question}

\begin{question}
Are there unknotted minimal embeddings of $K_5$ realizing any of the other sequences of $tbs$ for the $5-$cycles $(s_1, s_4, s_5, s_6, s_7, s_8, s_9, s_{10})$?
\end{question}

\begin{conjecture}
Any unknotted minimal embedding of $K_5$ will contain at least one $5-$cycle with $tb=-3$, and will not contain a $5-$cycle with $tb=-4$.  
\end{conjecture}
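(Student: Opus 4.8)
The plan is to translate the conjecture into a question about which of the twelve $5$-cycles can carry which $tb$, and then to locate a geometric obstruction that the $TB$-identities alone cannot see. Writing $n_k$ for the number of $5$-cycles with $tb=-k$, the two relations $\sum_k n_k=12$ and $\sum_k k\,n_k=20$ (the latter being $TB_5(K_5)=-20$) combine to $n_2+2n_3+3n_4=8$. In this language the conjecture asserts exactly that $n_4=0$ and $n_3\ge 1$, i.e.\ that only $s_2,s_3,s_4,s_5$ occur. So I would aim to rule out $s_1$ (which has $n_3=n_4=0$) and $s_6,\dots,s_{10}$ (which have $n_4\ge 1$).

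First I would extract the finest necessary conditions from the identity $TB_3(K_4)=TB_4(K_4)$ used in Proposition \ref{bound}. Fixing a $5$-cycle $C$ and a vertex $v$ of $C$, and viewing $v$ as subdividing the edge of the underlying $K_4$ that joins the two neighbors of $v$, one gets an equation of the form $tb(C)=tb(\ell_1)+tb(\ell_2)-2-tb(C')-tb(m)$, where $\ell_1,\ell_2$ are the two sub-$4$-cycles of $C$ omitting the non-neighbors of $v$, $C'$ is a partner $5$-cycle, and $m$ is one further $4$-cycle (the two LHS triangles contribute $-2$). Using that all $3$-cycles are trivial and that exactly five of the fifteen $4$-cycles have $tb=-2$ (Subsection \ref{subsecK4}), these equations force a great deal: if some $C$ had $tb=-4$, then equality must hold for every $v$, forcing $tb(\ell_1)=tb(\ell_2)=-2$. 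As $v$ ranges over the five vertices, the $\ell_i$ range over \emph{all} five sub-$4$-cycles of $C$, so the five $tb=-2$ $4$-cycles would be exactly the sub-$4$-cycles of $C$. Propagating this through the remaining subdivision identities then determines the $tb$ of every $5$-cycle, and I would record this bookkeeping carefully for $C=1\,2\,3\,4\,5$.

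The main obstacle is that this forcing is entirely self-consistent: the configuration it produces is precisely $s_6$ (six $5$-cycles with $tb=-1$, five with $tb=-2$, one with $tb=-4$), and it satisfies every $TB_3=TB_4$ relation together with $TB_5=-20$. Hence no contradiction can arise from the total Thurston--Bennequin number alone, and the conjecture genuinely requires an input beyond $TB$. The most promising such input is the rotation number together with the per-cycle Bennequin inequality $tb(\mathcal{U})+|rot(\mathcal{U})|\le -1$: a $5$-cycle with $tb=-4$ must have $rot\in\{\pm 1,\pm 3\}$, while the forced $tb=-2$ cycles have $rot=\pm 1$ and the $tb=-1$ cycles have $rot=0$. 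I would try to establish a rotation-number analogue of Theorem \ref{TBKn}, namely an additivity $ROT_3(K_4)=ROT_4(K_4)$ for consistently oriented cycles of each $K_4$ and its subdivisions, and then show that the rotation numbers forced on the sub-structure of $C$ are incompatible with any admissible value of $rot(C)$. Ruling out $s_1$ would proceed identically, starting from the observation that $n_3=n_4=0$ confines all $5$-cycles to $\{-1,-2\}$.

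The hard part, and the reason this remains a conjecture, is precisely the passage from these combinatorial and rotation-number constraints to a genuine obstruction to a Legendrian realization. Because the $TB$-identities already fail to exclude $s_6$, the whole weight of the argument rests on the finer invariant; and if rotation numbers also turn out to be consistent with the forbidden sequences, one is forced back to a direct realizability argument using the classification of Legendrian unknots and the way unknots of prescribed $(tb,rot)$ can share edges in a single front. I expect this last step to be the delicate one.
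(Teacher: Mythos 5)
This statement appears in the paper only as a conjecture; the authors give no proof of it, so there is no argument of theirs to measure yours against, and the relevant question is whether your proposal closes the gap. It does not --- as you yourself acknowledge in your final paragraph. That said, your preliminary analysis is sound and goes beyond what the paper makes explicit: the reduction of the conjecture to ``$n_4=0$ and $n_3\ge 1$'' via $n_2+2n_3+3n_4=8$ is correct, and your observation that the subdivision identity $TB_3(K_4)=TB_4(K_4)$ (the engine of Proposition \ref{bound}) forces all five sub-$4$-cycles of a hypothetical $tb=-4$ five-cycle to carry $tb=-2$, and that the resulting configuration is internally consistent and realizes $s_6$ numerically, is a genuine explanation of why the conjecture cannot follow from Theorem \ref{TBKn} and its corollaries alone.

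The gap is in the step you lean on to go further. The proposed identity $ROT_3(K_4)=ROT_4(K_4)$ is not established, and it is unlikely to hold in the form you need: the cancellations behind Theorem \ref{TBKn} work because $w(\gamma)$ and $c(\gamma)$ are orientation-independent, so each edge, each vertex, and each non-adjacent pair contributes with a fixed multiplicity (or cancels across oppositely-ordered pairs of cycles). The rotation number, by contrast, counts up- and down-cusps with signs that flip when an edge is traversed in the opposite direction, and the three $4$-cycles and four $3$-cycles of $K_4$ cannot all be oriented so that every edge is traversed compatibly in each cycle containing it; the edge contributions therefore do not assemble into a single orientation-free count. Even granting some rotation-number identity, your treatment of $s_1$ is only the phrase ``proceed identically,'' but $s_1$ is structurally different from $s_6,\dots,s_{10}$: it contains no cycle violating sharpness of the Bennequin inequality (all $5$-cycles have $tb\in\{-1,-2\}$), so rotation numbers give no leverage there and an entirely separate obstruction would be required. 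The conjecture remains open on both fronts, and your write-up should present it as such rather than as a proof.
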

\noindent Building on our example of minimal embeddings for $K_4$ in Figure \ref{K4-k-example}, there are also infinitely many different possible minimal embeddings of $K_5$.  

In Figure \ref{K33g-g}, we show that the $K_{3,3}$ embedding described after Theorem \ref{Knm} is the same as that shown in Figure \ref{K33examples}.
For the graph $K_{3,3}$, having an embedding with all its smallest cycles trivial unknots seems to be a more rigid constraint than it is for complete graphs.  
We have not found any other unknotted minimal embeddings or minimal embedding of $K_{3,3}$.  
\begin{figure}[htpb!]
\begin{center}
\includegraphics[width=6in]{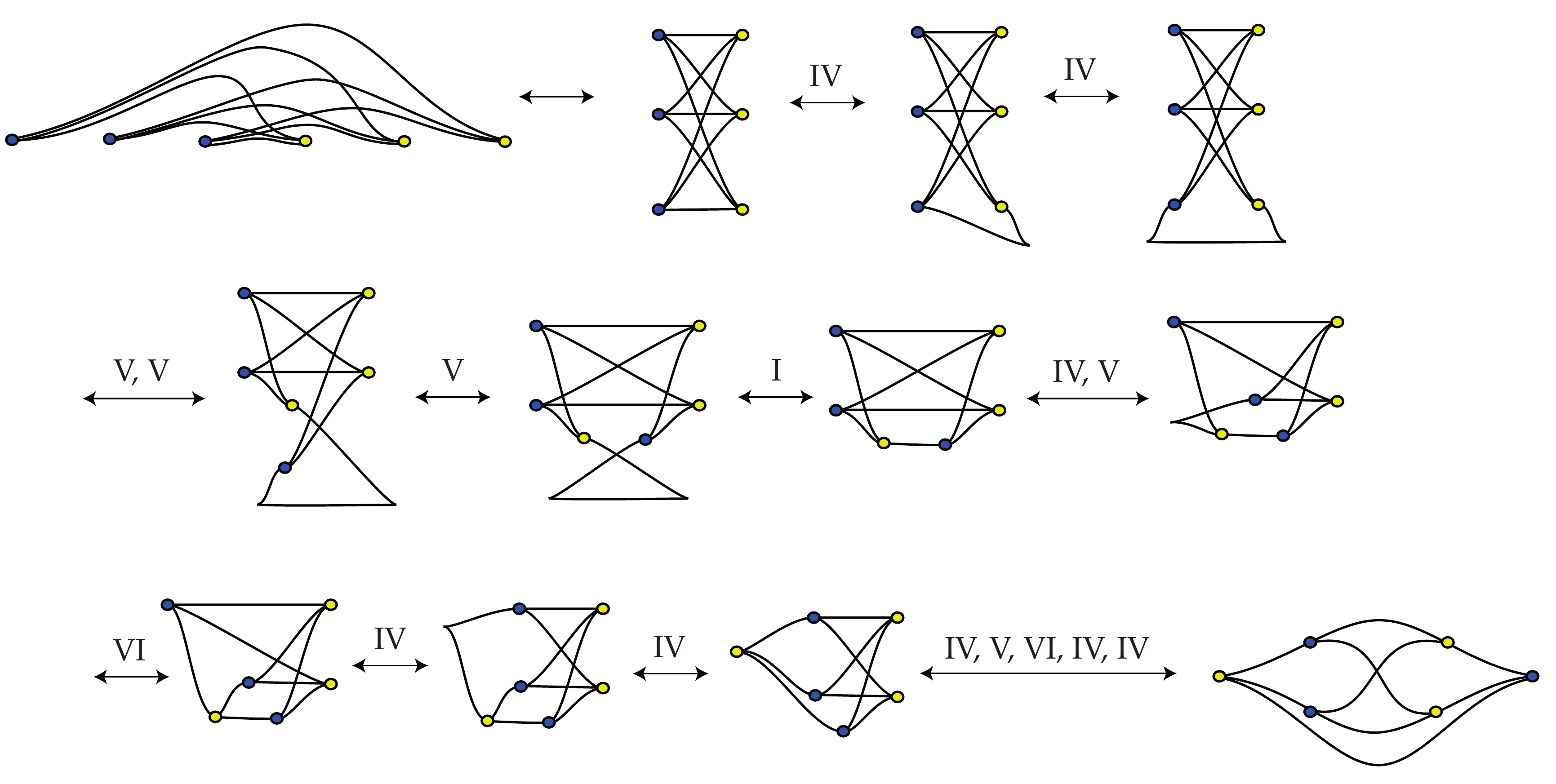}
\caption{\small A sequence of Legendrian $K_{3,3}$s related by Reidemeister moves and planar isotopy. }
\label{K33g-g}
\end{center}
\end{figure}

\begin{question}
Is the embedding shown in Figure \ref{K33examples} the unique unknotted minimal embedding of $K_{3,3}$?
\end{question}

\begin{question}
Is the embedding shown in Figure \ref{K33examples} the unique minimal embedding of $K_{3,3}$?
\end{question}

\bibliographystyle{amsplain}

\end{document}